\newcommand{\Gal}{\text{\rm Gal}}
\newcommand{\comment}[1]{}
\begin{document}

\keywords{free pro-$p$ groups, elementary $p$-abelian extension, Kummer theory, Artin-Schreier theory, Heller operator}

\thanks{The second author is partially supported by the Natural Sciences and Engineering Research Council of Canada grant R0370A01.  He also gratefully acknowledges the Faculty of Science Distinguished Research Professorship, Western Science, in years 2004/2005 and 2020/2021. The second and third authors also gratefully acknowledge the support of Western Academy for Advanced Research at Western University.  The fourth author is partially supported by 2021 Wellesley College Faculty Awards. The fifth author is funded by Vingroup Joint Stock Company and supported by Vingroup Innovation Foundation (VinIF) under the project code VINIF.2021.DA00030}

\title[Module structure of elementary $p$-abelian extensions]{Galois module structure of some elementary $p$-abelian extensions}

\author[Lauren Heller]{Lauren Heller}
\address{Department of Mathematics, University of California, Berkeley, 970 Evans Hall \#3840, Berkeley, CA \ 94720-3840 \ USA}
\email{lch@math.berkeley.edu}

\author[J.~Min\'{a}\v{c}]{J\'{a}n Min\'{a}\v{c}}
\address{Department of Mathematics, Western University, London, Ontario, Canada N6A 5B7}
\email{minac@uwo.ca}

\author[T.T.~Nguyen]{Tung T.~Nguyen}
\address{Department of Mathematics, Western University, London, Ontario, Canada N6A 5B7}
\email{tungnt@uchicago.edu}

\author[Andrew Schultz]{Andrew Schultz}
\address{Department of Mathematics, Wellesley College, 106 Central Street, Wellesley, MA \ 02481 \ USA}
\email{andrew.c.schultz@gmail.com}

\author[N.D.~T\^an]{{\fontencoding{T5}\selectfont Nguy\~ \ecircumflex n Duy T\^an}}
\address{School of Applied Mathematics and Informatics, Hanoi University of Science and
Technology, Hanoi, Vietnam}
\email{tan.nguyenduy@hust.edu.vn}

\dedicatory{To Professor Moshe Jarden, with gratitude for his vision and encouragement.}

\date{\today}

\begin{abstract}
We determine the Galois module structure of the parameterizing space of elementary $p$-abelian extensions of a field $K$ when $\Gal(K/F)$ is any finite $p$-group, under the assumption that the maximal pro-$p$ quotient of the absolute Galois group of $F$ is a free, finitely generated pro-$p$ group, and that $F$ contains a primitive $p$th root of unity if $\text{char}(F) \neq p$.
\end{abstract}

\maketitle

\newtheorem*{theorem*}{Theorem}
\newtheorem*{lemma*}{Lemma}
\newtheorem{theorem}{Theorem}
\newtheorem{proposition}{Proposition}[section]
\newtheorem{corollary}[proposition]{Corollary}
\newtheorem{lemma}[proposition]{Lemma}

\theoremstyle{definition}
\newtheorem*{definition*}{Definition}
\newtheorem*{remark*}{Remark}
\newtheorem{example}[proposition]{Example}

\parskip=10pt plus 2pt minus 2pt

\section{Introduction}
Throughout the paper we adopt the following notation: $F$ is a field and $F(p)$ the maximal pro-$p$ extension of $F$.  We write $G_F(p) = \Gal(F(p)/F)$; this is the maximal pro-$p$ quotient of the absolute Galois group of $F$.  We assume throughout that $G_F(p)$ is a free, finitely-generated pro-$p$ group.  We let $K/F$ be an extension so that $G:=\Gal(K/F)$ is a finite $p$-group.  If $\text{char}(F) \neq p$, then we assume $F$ contains a primitive $p$th root of unity $\xi_p$.  We show that these hypotheses are satisfied in many interesting cases that play an important role in Galois theory in Examples \ref{ex:example.char.p} through \ref{ex:example.geometry} below. 

The object we will be investigating is the parameterizing space of elementary $p$-abelian extensions of $K$, which we denote by $J(K)$. In the case where $\text{char}(K) = p$, Artin-Schreier theory tells us that $J(K) = K/\wp(K)$, where $\wp(K) = \{k^p-k:k \in K\}$.   If we have $\text{char}(K)\neq p$ and $\xi_p \in K$, then Kummer theory tells us that $J(K) = K^\times/K^{\times p}$.  In either case, the submodule of elements which have a representative from $F$ plays an important role in the structure of $J(K)$; we denote this module by $[F]$.  More specifically, we have $[F] = (F+\wp(K))/\wp(K)$ when $\text{char}(F) = p$, and $[F] = FK^{\times p}/K^{\times p}$ otherwise.

The most natural structure on $J(K)$ is that of an $\mathbb{F}_p$-vector space. Artin-Schreier and Kummer theory tell us that for any $\ell \in \mathbb{N}$, the $\ell$-dimensional subspaces of $J(K)$ are in bijection with elementary $p$-abelian extensions of $K$ of rank $\ell$.  Since $K$ is a Galois extension of $F$ in the context of this paper, $J(K)$ is more than just a vector space: it is an $\mathbb{F}_p[\Gal(K/F)]$-module.  This  structure encodes additional Galois-theoretic information about elementary $p$-abelian extensions of $K$.  Specifically, the finite $\mathbb{F}_p[\Gal(K/F)]$-submodules of $J(K)$ are in bijection with the elementary $p$-abelian extensions of $K$ which are additionally Galois over $F$.  

In the case where $\Gal(K/F)$ is a cyclic $p$-group, the module structure of $J(K)$ has already been computed.  The surprise in computing the structure of $J(K)$ in that case isn't so much that it has a computable decomposition, but rather that its decomposition is far simpler than one might expect \emph{a priori}. Specifically, it contains at most $2\log_p(|\Gal(K/F)|)+1$ isomorphism classes of indecomposable summands from the $|\Gal(K/F)|$ possibilities. For example, if $\Gal(K/F)$ is a cyclic group of order $p^2$, there are $p^2$ indecomposable cyclic modules with dimensions $1,2,\cdots,p^2$.  In the decomposition of $J(K)$, however, only cyclic modules of dimensions $1$, $2$, $p$, $1+p$, and $p^2$ can occur.  Not only that, but for any given module $M \subseteq J(K)$ and its corresponding elementary $p$-abelian extension $L/K$, there is an explicit dictionary which allows one to connect module- and field-theoretic properties of $M$ to the group structure of $\Gal(L/F)$ (see \cite{MSS1,MS1,Schultz}).  These two ingredients have provided a new approach for analyzing absolute Galois groups, including both automatic realization results and realization multiplicity results which serve to distinguish the class of absolute Galois groups within the larger category of profinite groups  (\cite{BS,CMSHp3,MSSauto,MS2}). 

Outside of the case where $\Gal(K/F)$ is a cyclic $p$-group, there have also been a few explorations into the module structure of $J(K)$ (and related objects) when $\Gal(K/F)$ is noncyclic. The interested reader may consult \cite{CheboluMinac,CMSS,Eimer,MST}.

The feature which distinguishes this work from previous results is the base extension's group structure: in this paper we allow $\Gal(K/F)$ to be an arbitrary finite $p$-group.  With such a mild assumption to work with, the representation theory of $\mathbb{F}_p[\Gal(K/F)]$ is typically intractable to understand holistically.  For example, when $p$ is an odd prime, even $\mathbb{F}_p[\mathbb{Z}/p\mathbb{Z}\oplus\mathbb{Z}/p\mathbb{Z}]$ has a wild representation type.  Despite the complexities from the modular representation viewpoint, we are able to compute a decomposition of $J(K)$ in this setting.  Even more, our decomposition consists of only two isomorphism classes of indecomposable summands: one free, and one not.  The non-free summand is best described in terms of a functorial construction on the class of $\mathbb{F}_p[\Gal(K/F)]$-modules known as the Heller operator, denoted $\Omega_{\mathbb{F}_p[\Gal(K/F)]}$.

We will describe the Heller operator in detail in section \ref{sec:module}. For now, we point out a few salient features of its second iterate on the trivial $\mathbb{F}_p[\Gal(K/F)]$-module $\mathbb{F}_p$ (denoted $\Omega^{2}_{\mathbb{F}_p[\Gal(K/F)]}(\mathbb{F}_p)$) and that module's dual (denoted $\Omega^{-2}_{\mathbb{F}_p[\Gal(K/F)]}(\mathbb{F}_p)$) since the latter plays a key role in our decomposition. In Lemma \ref{le:exact.sequence.for.Heller} we will be able to find $\Omega^{2}_{\mathbb{F}_p[\Gal(K/F)]}(\mathbb{F}_p)$ in a short exact sequence involving $\mathbb{F}_p$, $\mathbb{F}_p[G]$, and a free $\mathbb{F}_p[G]$-module on $d$ generators (where $d$ is the minimum number of generators of $G$). From this short exact sequence, one can determine many structural properties of $\Omega^{\pm 2}_{\mathbb{F}_p[\Gal(K/F)]}(\mathbb{F}_p)$.  For example, again using $d$ to represent the minimal number of generators for $G$, one finds that $$\dim_{\mathbb{F}_p}\left(\Omega^{2}_{\mathbb{F}_p[\Gal(K/F)]}(\mathbb{F}_p)\right)=\dim_{\mathbb{F}_p}\left(\Omega^{-2}_{\mathbb{F}_p[\Gal(K/F)]}(\mathbb{F}_p)\right)=(d-1)|G|+1.$$  With more specific information about the group $G$, one can give even more insight into the structure of these modules.  In section \ref{sec:bicyclic.example}, for example, we study the modules $\Omega^{2}_{\mathbb{F}_p[\Gal(K/F)]}(\mathbb{F}_p)$ and $\Omega^{-2}_{\mathbb{F}_p[\Gal(K/F)]}(\mathbb{F}_p)$ quite carefully in the specific case that $G \simeq \mathbb{Z}/p\mathbb{Z}\oplus\mathbb{Z}/p\mathbb{Z}$, and we see that the former is generated by $3$ elements and has a fixed module of dimension $2$, whereas the latter is generated by $2$ elements and has a fixed module of dimension $3$.

\begin{theorem}\label{th:main.theorem}
Suppose that $F$ is a field so that  the maximal pro-$p$ quotient $G_F(p)$ of the absolute Galois group of $F$ is a free, finitely-generated pro-$p$ group, and suppose that $K/F$ is an extension so that $\Gal(K/F)$ is a finite $p$-group. If $\text{char}(K) \neq p$, assume that $\xi_p \in K$.  Let $J(K)$ be the parameterizing space of elementary $p$-abelian extensions of $K$, so that $J(K)=K^\times/K^{\times p}$ if $\text{char}(K) \neq p$ and $J(K) = K/\wp(K)$ otherwise.  Then as an $\mathbb{F}_p[\Gal(K/F)]$-module, $$J(K) \simeq \Omega_{\mathbb{F}_p[\Gal(K/F)]}^{-2}(\mathbb{F}_p) \oplus Y,$$  where $Y$ is a free module of rank $\dim [F]$.
\end{theorem}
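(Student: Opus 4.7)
The plan is to identify $J(K)$ with the $\mathbb{F}_p$-linear dual of the pro-$p$ abelianization of $G_K := \Gal(F(p)/K)$, analyze this $\mathbb{F}_p[G]$-module via a Gruenberg-type exact sequence coming from $1 \to G_K \to G_F(p) \to G \to 1$, and then recover $J(K)$ by dualizing.  As $G_K$ is an open subgroup of the free pro-$p$ group $G_F(p)$, it is itself free pro-$p$ (of rank $|G|(n-1)+1$ by Schreier, where $n := \rank G_F(p)$).  The identification $J(K) \cong \text{Hom}_{\mathbb{F}_p}(G_K^{\text{ab}} \otimes \mathbb{F}_p, \mathbb{F}_p)$ as $\mathbb{F}_p[G]$-modules follows from $J(K) = H^1(G_K, \mathbb{F}_p)$ via Kummer theory in characteristic $\neq p$ (with the $G$-equivariance of $\mu_p \cong \mathbb{F}_p$ supplied by $\xi_p \in F$) and Artin--Schreier theory in characteristic $p$.

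Applying $\mathbb{Z}_p[G] \otimes_{\mathbb{Z}_p[[G_F(p)]]} (-)$ to the defining exact sequence $0 \to I_{G_F(p)} \to \mathbb{Z}_p[[G_F(p)]] \to \mathbb{Z}_p \to 0$ (with $I_{G_F(p)}$ free of rank $n$ over $\mathbb{Z}_p[[G_F(p)]]$, since $G_F(p)$ is free pro-$p$) and computing the relevant $\text{Tor}_1$ produces the Gruenberg exact sequence of $\mathbb{Z}_p[G]$-modules
$$0 \to G_K^{\text{ab}} \to \mathbb{Z}_p[G]^n \to I_{\mathbb{Z}_p[G]} \to 0.$$
Because $I_{\mathbb{Z}_p[G]}$ is $\mathbb{Z}_p$-torsion free, reducing mod $p$ is exact:
$$0 \to G_K^{\text{ab}} \otimes \mathbb{F}_p \to \mathbb{F}_p[G]^n \to I_{\mathbb{F}_p[G]} \to 0. \qquad (\star)$$
Now $I_{\mathbb{F}_p[G]} = \Omega(\mathbb{F}_p)$ has minimal projective cover $\mathbb{F}_p[G]^{d(G)}$ (with $d(G) := \dim_{\mathbb{F}_p} H^1(G, \mathbb{F}_p)$) and kernel $\Omega^2(\mathbb{F}_p)$ containing no projective summands. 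Schanuel's lemma applied to $(\star)$ and this minimal cover, combined with Krull--Schmidt for $\mathbb{F}_p[G]$, yields
$$G_K^{\text{ab}} \otimes \mathbb{F}_p \cong \Omega^2(\mathbb{F}_p) \oplus \mathbb{F}_p[G]^{n - d(G)}.$$

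Dualize by $(-)^* := \text{Hom}_{\mathbb{F}_p}(-, \mathbb{F}_p)$: since $\mathbb{F}_p[G]$ is a symmetric Frobenius algebra, $(\mathbb{F}_p[G])^* \cong \mathbb{F}_p[G]$, and dualizing the minimal projective resolution of $\mathbb{F}_p$ produces its minimal injective coresolution, identifying $\Omega^i(\mathbb{F}_p)^* \cong \Omega^{-i}(\mathbb{F}_p)$. Hence
$$J(K) = (G_K^{\text{ab}} \otimes \mathbb{F}_p)^* \cong \Omega^{-2}(\mathbb{F}_p) \oplus \mathbb{F}_p[G]^{n - d(G)}.$$
To match the free rank with $\dim_{\mathbb{F}_p}[F]$, use the inflation--restriction sequence
$$0 \to H^1(G, \mathbb{F}_p) \to H^1(G_F(p), \mathbb{F}_p) \to H^1(G_K, \mathbb{F}_p)^G \to H^2(G, \mathbb{F}_p) \to 0,$$
which is exact on the right because $H^2(G_F(p), \mathbb{F}_p) = 0$ for $G_F(p)$ free pro-$p$. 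Under Kummer/Artin--Schreier, restriction corresponds to $F^\times/F^{\times p} \to K^\times/K^{\times p}$ (respectively $F/\wp(F) \to K/\wp(K)$), whose image is exactly $[F]$, so $\dim_{\mathbb{F}_p}[F] = n - d(G)$, completing the theorem.

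The principal technical hurdle is setting up the Gruenberg exact sequence rigorously in the pro-$p$ context (including the $\text{Tor}$-vanishing needed for mod-$p$ reduction) and then cleanly separating the non-projective summand $\Omega^2(\mathbb{F}_p)$ from the free part via Schanuel's lemma and Krull--Schmidt; once this bookkeeping is secured, the dualization step and the identification of $[F]$ with the image of restriction are essentially formal.
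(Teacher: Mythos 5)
Your proposal is correct, but it takes a genuinely different route from the paper. The paper's argument is constructive and field-theoretic: it builds an explicit tower $\Lambda/L/K/F$ with $\Gal(\Lambda/F)$ free pro-$p$ on $d(G)$ generators, identifies the submodule $X\subseteq J(K)$ corresponding to $L$ with $\Omega^{-2}(\mathbb{F}_p)$ by citing Gasch\"utz's theorem on the Frattini module $M_0=\ker(\theta)/\Phi(\ker(\theta))$, manufactures the free complement $Y$ from explicit elements $[k_f]$ using surjectivity of the trace/norm map (itself proved via vanishing of cup products over the free group $G_F(p)$), checks independence with a fixed-point lemma, and closes with a dimension count via Schreier's formula. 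You instead compute the abstract isomorphism type of $G_K^{\mathrm{ab}}\otimes\mathbb{F}_p$ in one shot from the pro-$p$ Gruenberg relation-module sequence, peel off the free part with Schanuel's lemma and Krull--Schmidt (using that $\Omega^2(\mathbb{F}_p)$ has no projective summands), dualize, and identify the free rank with $\dim[F]$ via inflation--restriction; in effect you re-derive the Gasch\"utz-type input that the paper quotes as Proposition \ref{prop:Gaschutz}, for the possibly non-minimal presentation $1\to G_K\to G_F(p)\to G\to 1$. Your route is shorter and more uniform --- it bypasses the norm/trace surjectivity lemma and the explicit independence and dimension-count arguments entirely --- but it produces only an abstract isomorphism; the paper's construction exhibits the two summands as concrete submodules of $J(K)$ with field-theoretic meaning (the $\Omega^{-2}$ summand is the class group of the maximal elementary $p$-abelian subextension of $\Lambda/K$, and $Y$ is generated by elements whose traces/norms represent a basis of $[F]$), which is exactly what is exploited in section \ref{sec:computing.Galois.groups}. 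The technical points you flag (the completed-group-algebra version of the Gruenberg sequence, the $\mathrm{Tor}$ vanishing for the mod-$p$ reduction, and the $G$-equivariance of the Kummer identification, which needs $\xi_p\in F$ --- automatic here since $[F(\xi_p):F]$ divides both $p-1$ and a power of $p$) are all standard and pose no obstruction.
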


\begin{example}\label{ex:example.char.p}
The hypotheses of Theorem \ref{th:main.theorem} hold for any field $F$ of characteristic $p$ for which $ \dim F/\wp(F) < \infty$. This is a consequence of a theorem of Witt which tells us that $G_F(p)$ is a free pro-$p$ group for all fields $F$ with $\text{char}(F) = p$. 
\end{example}

\begin{example}
A field $L$ is said to be Pythagorean if each finite sum of squares in $L$ is again a square in $L$, and a Pythagorean field is called formally real if $-1 \not\in L^{\times 2}$.  A formally real Pythagorean field $L$ for which $|L^\times/L^{\times 2}| = 2^{n}$ is said to have the strong approximation property (SAP) if $L$ admits exactly $n$ orderings.   Suppose that $\hat F$ is an SAP field, and let $F = \hat F(\sqrt{-1})$.  Then $G_F(2)$ is a free pro-$2$ group of rank $n-1$; this is proved in \cite[Le.~4.6]{MRT} using a group theoretic approach, and in \cite[Prop.~3.4.15]{Rogelstad} using a combination of Galois cohomology and field theory.  Hence the results of Theorem \ref{th:main.theorem} hold for such a field.  (For more information on SAP fields, the interested reader can consult sections 4.1 and 4.2 from \cite{MRT}, as well as \cite{Mi} and chapter 17 of \cite{Lam}.)

Notice in particular that we can realize an abundance of fields of this type by using a combination of \cite[Prop.~1.3]{EfratHaran} and \cite[Th.~4.1]{MRT}: the former tells us that if $n \in \mathbb{N}$ is given and $K_1,\cdots,K_n$ are fields of characteristic $0$ for which $G_{K_i}(2) \simeq \mathbb{Z}/2\mathbb{Z}$, then there exists a field $\hat F$ with $\text{char}(\hat F)=0$ whose absolute Galois group is the free product of $n$ copies of $\mathbb{Z}/2\mathbb{Z}$ (in the category of pro-$2$ groups); then the latter result tells us that this field is an SAP field.  
\end{example}

\begin{example}\label{ex:example.PAC}
A field $L$ is called pseudo algebraically closed (PAC) if it has the property that every nonempty variety defined over $L$ has an $L$-rational point.  It is known that the absolute Galois group of such a field is projective (see \cite[Th.~11.6.2]{FJ}), and hence its $p$-Sylow subgroup is free.  Moreover, from \cite[Th.~23.1.2]{FJ} we have that any projective group can be realized as the absolute Galois group of a perfect PAC field. Since projective groups and free pro-$p$ groups are the same in the category of pro-$p$ groups (see \cite[Th.~4.8]{Koch} or \cite[Cor.~22.7.6]{FJ}), these fields give a bounty of examples in which the hypotheses of Theorem \ref{th:main.theorem} are applicable.
\end{example}

\begin{example}\label{ex:example.geometry}
There are a number of fields that arise from geometric considerations that have free absolute Galois groups.  These results are connected to Shafarevich's conjecture that the absolute Galois group of $\mathbb{Q}^{\text{ab}}$ is free; this conjecture has been generalized to state that the absolute Galois group of the maximal cyclotomic extension of any global field $K$ is free.  The function field case was settled in \cite{Harbater} and \cite{Pop}, and generalizations of Shafarevich's conjecture have been explored in the function field case in \cite{HarbaterFunctionFields} as well.  (See also \cite{Pop2}, and \cite{HarbaterExposition} for a nice exposition of the Shafarevich Conjecture.)  Hence Theorem \ref{th:main.theorem} can be applied to finitely generated subgroups of the $p$-Sylow subgroup of these absolute Galois groups. 

It is worth observing that the freeness of the $p$-Sylow subgroup of $\Gal(\mathbb{Q}^{\textrm{ab}})$ can be proved unconditionally without appealing to Shafarevich's conjecture.  In fact, the proof is quite simple using the local-global principle and the fact that a pro-$p$-group $G$ for which the second cohomology $H^2(G,\mathbb{F}_p)$ vanishes is necessarily a free pro-$p$-group.  (See \cite[Th.~4.12]{Koch}.) The interested reader can consult Observation 3.1 from \cite{BSJN} for further details of this argument.
\end{example}

The paper proceeds as follows. In section \ref{sec:prelims} we review the basic module-, group-, and field-theoretic information we will need to proceed with our argument.  The proof of the main theorem comes in section \ref{sec:proof.of.main.theorem}.  In section \ref{sec:computing.Galois.groups} we begin the investigation of how to compute $\Gal(L/F)$ when $L$ is an extension corresponding to a submodule $N \subseteq J(K)$ by settling this question in the specific case where $N$ is one of the summands from our decomposition.  Finally, in section \ref{sec:bicyclic.example}, we compute the explicit module structure of the non-free summand in the case where $\Gal(K/F) \simeq \mathbb{Z}/p\mathbb{Z}\oplus\mathbb{Z}/p\mathbb{Z}$.

\subsection*{Acknowledgements}

We are very grateful to our collaborators on previous projects associated to this investigation, including S.~Chebolu, F.~Chemotti, P.~Guillot, J.~Swallow, and A.~Topaz. We are also very grateful for productive discussions with A.~Eimer concerning related research. Finally, we thank an anonymous referee for their help in improving the quality and clarity of this paper.

\section{Preliminaries}\label{sec:prelims}  

\subsection{Module-theoretic properties}\label{sec:module}

Our main result concerns the decomposition of an $\mathbb{F}_p[G]$-module when $G$ is a finite $p$-group, so we will need a few module-theoretic results. When we describe properties of a generic $\mathbb{F}_p[G]$-module $M$ in this section, we will assume that $M$ is an additive module, and hence the $G$-action will be written multiplicatively.  In particular, the trivial element will be denoted $0_M$.

First we record a lemma that gives us a convenient way to detect when two submodules are independent.

\begin{lemma}\label{le:exclusion.lemma}
Suppose that $G$ is a finite $p$-group and that $M$ is an $\mathbb{F}_p[G]$-module containing submodules $U$ and $V$.  Then $U \cap V = \{0_M\}$ if and only if $U^G \cap V^G = \{0_M\}$.
\end{lemma}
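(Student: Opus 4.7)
The plan is to prove the contrapositive of the nontrivial direction. The easy direction is immediate: if $U \cap V = \{0_M\}$, then $U^G \cap V^G \subseteq U \cap V = \{0_M\}$, so equality of fixed submodules with $\{0_M\}$ follows.

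For the converse, I would assume $U \cap V$ contains some nonzero element $w$ and show that $U^G \cap V^G$ contains a nonzero element as well. Directly from the definitions one checks $(U \cap V)^G = U^G \cap V^G$, so it is enough to prove the following auxiliary claim: every nonzero $\mathbb{F}_p[G]$-submodule $W$ of $M$ has nontrivial $G$-fixed points. Applying this with $W = U \cap V$ then produces a nonzero element of $U^G \cap V^G$.

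To prove the claim, I would invoke the classical fixed-point theorem for $p$-groups: if a finite $p$-group $G$ acts on a finite set $X$, then $|X^G| \equiv |X| \pmod p$. Because $M$ is not assumed finite, I would first cut down to the cyclic submodule $N := \mathbb{F}_p[G]\cdot w \subseteq W$. Since $G$ is finite and $N$ is spanned over $\mathbb{F}_p$ by the finitely many translates $\{gw : g \in G\}$, the submodule $N$ is a finite $\mathbb{F}_p$-vector space, nonzero because $w \neq 0_M$. Its cardinality is therefore a positive power of $p$, so the fixed-point congruence forces $|N^G|$ to be divisible by $p$, and in particular greater than $1$. Any nonzero element of $N^G$ lies in $W^G = U^G \cap V^G$, as required.

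The main point is nothing more than the fixed-point theorem for $p$-groups together with the reduction from an arbitrary submodule to a finite cyclic one; there is no serious obstacle, and I expect the proof to be short.
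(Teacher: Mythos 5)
Your proof is correct and follows essentially the same route as the paper: both reduce the nontrivial direction to the fact that a finite $p$-group acting on a nonzero $\mathbb{F}_p$-vector space has a nonzero fixed vector. The only difference is that the paper cites this fact from Koch's book, while you supply its standard proof via the orbit-counting congruence after cutting down to the finite cyclic submodule generated by a nonzero element of $U \cap V$.
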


\begin{proof}
The fact that $U \cap V = \{0_M\}$ implies $U^G \cap V^G = \{0_M\}$ is obvious.  For the other direction, recall from \cite[Lem.~4.13]{Koch} that the action of every finite $p$-group on a nonzero $\mathbb{F}_p$-vector space has a nonzero fixed vector.  Hence if $U \cap V \neq \{0_M\}$, then there is some nontrivial element in $U^G \cap V^G$.  
\end{proof}

The following basic fact from representation theory will give us some additional useful consequences.

\begin{lemma}\label{eq:fixed.part.of.free.module}
For a finite group $G$ and any ring $R$, the fixed submodule of $R[G]$ is $\langle \sum_{g \in G} g \rangle_{R}$.
\end{lemma}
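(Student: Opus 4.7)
The plan is to write a generic element $x \in R[G]$ as $x = \sum_{g \in G} a_g g$ with $a_g \in R$, and then translate the condition ``$x$ is fixed by every element of $G$'' into a linear condition on the coefficients $\{a_g\}_{g \in G}$.

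First I would compute $h \cdot x$ for an arbitrary $h \in G$, using the $G$-action by left multiplication on $R[G]$. Reindexing the sum $h \cdot x = \sum_{g} a_g (hg)$ via the substitution $g' = hg$ gives $h \cdot x = \sum_{g'} a_{h^{-1}g'} g'$. Comparing coefficients of $g'$ in $h \cdot x$ and $x$, the requirement $h \cdot x = x$ becomes $a_{h^{-1}g'} = a_{g'}$ for all $g' \in G$, i.e., the coefficient function $g \mapsto a_g$ is constant on the left $G$-orbit of each element. Since $G$ acts transitively on itself, this forces all $a_g$ to be equal to a common value $a \in R$, so $x = a \sum_{g \in G} g$. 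This shows $R[G]^G \subseteq \langle \sum_{g \in G} g \rangle_R$.

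For the reverse containment, I would simply observe that for any $h \in G$, left multiplication by $h$ permutes the elements of $G$, so $h \cdot \sum_{g \in G} g = \sum_{g \in G} hg = \sum_{g' \in G} g'$. Hence $\sum_{g \in G} g \in R[G]^G$, and since fixed points form an $R$-submodule, $\langle \sum_{g \in G} g \rangle_R \subseteq R[G]^G$.

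There is no real obstacle here; the only subtlety worth mentioning is the reindexing step, and it is important to be explicit about whether one is using the left or right regular action (the statement is symmetric, so either convention works identically). The argument requires no hypotheses on $R$ beyond being a ring, and no hypotheses on $G$ beyond being a group (finiteness is only needed so that $\sum_{g \in G} g$ makes sense as an element of $R[G]$).
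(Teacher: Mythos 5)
Your proposal is correct and follows essentially the same route as the paper's proof: expand a fixed element as $\sum_{g\in G} a_g g$, apply the action of an arbitrary group element, and compare coefficients to conclude all $a_g$ coincide (the paper compares only the coefficient of the identity, while you compare at every $g'$, but this is an immaterial difference). The easy reverse containment is handled identically in both arguments.
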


\begin{proof}
Clearly any multiple of $\sum_{g \in G} g$ is fixed by $G$, so let $\gamma$ be a fixed element of $R[G]$, and let $c_g \in R$ be given so that $\gamma= \sum_{g \in G} c_g g$.  Now observe that for any $\hat g \in G$ we have $\hat g \gamma = \gamma$.  Comparing the coefficients of the identity element $e_G$ on both sides of this equation, we find that $c_{\hat g^{-1}} = c_{e_G}$.  Since this is true for all $\hat g \in G$, we have that $\gamma = c_{e_G}\left(\sum_{g \in G} g\right)$ as desired.
\end{proof}

\begin{corollary}[Cf.~{\cite[Lem.~2.4]{AMT}}]\label{cor:all.ideals.contain.norm}
Suppose that $G$ is a finite $p$-group.  Any nontrivial left ideal of $\mathbb{F}_p[G]$ contains the element $\sum_{g \in G} g$.
\end{corollary}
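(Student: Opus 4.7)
The plan is to combine the two results immediately preceding the corollary, namely Lemma \ref{eq:fixed.part.of.free.module} (which identifies the $G$-fixed part of $\mathbb{F}_p[G]$ as the one-dimensional line spanned by $N:=\sum_{g\in G} g$) with the classical fact quoted in the proof of Lemma \ref{le:exclusion.lemma} that any action of a finite $p$-group on a nonzero $\mathbb{F}_p$-vector space must have a nonzero fixed vector.

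Concretely, let $I$ be a nontrivial left ideal of $\mathbb{F}_p[G]$. Since $I$ is closed under left multiplication by elements of $G$, it inherits a $G$-action, and as an $\mathbb{F}_p$-vector space it is nonzero. First, I would invoke \cite[Lem.~4.13]{Koch} (as already used above) to conclude that $I^G \neq \{0\}$. Next, I would observe that $I^G = I \cap \mathbb{F}_p[G]^G$, and by Lemma \ref{eq:fixed.part.of.free.module} the latter intersection sits inside the one-dimensional $\mathbb{F}_p$-subspace $\langle N \rangle_{\mathbb{F}_p}$. A nonzero subspace of a one-dimensional space is the whole space, so $I^G = \langle N \rangle_{\mathbb{F}_p}$, and in particular $N \in I$.

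This is really just a two-line argument; the only subtle point is to make sure one is entitled to apply the $p$-group fixed-vector lemma, which requires noting that $I$ is a nonzero $\mathbb{F}_p$-vector space carrying a $G$-action, both of which are immediate from $I$ being a nontrivial left ideal. There is no significant obstacle; the work has been done in the two preceding lemmas, and the corollary is essentially a packaging statement about the socle of the local ring $\mathbb{F}_p[G]$.
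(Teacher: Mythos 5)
Your argument is correct and is essentially the same as the paper's: both deduce $I^G \neq \{0\}$ from the fixed-vector property of finite $p$-groups acting on nonzero $\mathbb{F}_p$-vector spaces (via Lemma \ref{le:exclusion.lemma} and \cite[Lem.~4.13]{Koch}) and then conclude via Lemma \ref{eq:fixed.part.of.free.module} that the fixed part is the line spanned by $\sum_{g \in G} g$. Your write-up just makes the intermediate step $I^G = I \cap \mathbb{F}_p[G]^G$ slightly more explicit than the paper does.
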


\begin{proof}
Following the proof of Lemma \ref{le:exclusion.lemma}, any nontrivial left ideal $I$ contains a nontrivial fixed element.  From Lemmas \ref{le:exclusion.lemma} and \ref{eq:fixed.part.of.free.module}, we have the desired result.  
\end{proof}

%
%

To conclude this subsection, we consider a module construction that arises from projective resolutions.  Let $M$ be a finitely-generated $\mathbb{F}_p[G]$-module, and let $P$ be a minimal projective cover of $M$ (in the category of $\mathbb{F}_p[G]$-modules), meaning a surjection $\varphi:P \twoheadrightarrow M$ so that $P$ is a projective $\mathbb{F}_p[G]$-module and no summand of $P$ lies in the kernel of $\varphi$.  Observe that a minimal projective cover exists and that it is unique up to isomorphism (see \cite[Th.~4.1]{Carlson} and its proof). Not surprisingly, our  minimal projective cover $P$ is a projective module in the category of $\mathbb{F}_p[G]$-modules of smallest dimension such that there exists a surjective homomorphism from this module onto $M$.  The proof of uniqueness is a simple but very nice application of Fitting's lemma.  (Details are in \cite[pg.~13]{CarlsonBook}.) 

We then define the Heller shift of $M$ to be $\Omega(M) := \ker(\varphi)$.   Iterating this procedure, we set $\Omega^1(M) = \Omega(M)$ and $\Omega^n(M):=\Omega\left(\Omega^{n-1}(M)\right)$ for $n\geq 2$.   This construction can be dualized as well, by defining $\Omega^{-d}(M) := \left(\Omega^{d}(M^*)\right)^*$.  We can also define $\Omega^0(M) = \Omega^{-1}(\Omega^1(M))$; it follows that there exists some projective $\mathbb{F}_p[G]$-module $Q$ so that $M \simeq \Omega^0(M) \oplus Q$.

These modules satisfy a number of useful properties that are summarized nicely in \cite[Prop.~4.4]{CarlsonBook}. For example, suppose that $n$ and $m$ are integers, and that $N$ and $M$ are $\mathbb{F}_p[G]$-modules.  Then we have $\Omega^{n+m}(M) \simeq \Omega^n\left(\Omega^m(M)\right)$ and $\Omega^n(M\oplus N) \simeq \Omega^n(M) \oplus \Omega^n(N)$.  We also have that $\Omega^n(M)$ has no nonzero projective submodules, and $\Omega^n(M) = 0$ if and only if $M$ is projective.
%
%
%

In what follows, we will be particularly interested in $\Omega^{2}(\mathbb{F}_p)$ when $G$ is a finite $p$-group, and so it will be useful to have a more concrete description of this module.

\begin{lemma}[{\cite[Eq.~(2.16)]{Carlson}}]\label{le:exact.sequence.for.Heller}
Let $G$ be a finite $p$-group, and write $\sigma_1,\cdots,\sigma_d$ for a minimal set of generators.  Let $P_1$ be the free $\mathbb{F}_p[G]$-module on generators $c_1,\cdots,c_d$.  Let $\varepsilon:\mathbb{F}_p[G] \to \mathbb{F}_p$ be the augmentation map defined by $\varepsilon(g) = 1$ for all $g \in G$, and let $\partial:P_1 \to \mathbb{F}_p[G]$ be given by $\partial(c_i) = \sigma_i-1$.  Then $\ker(\partial) = \Omega^2(\mathbb{F}_p)$, and the following sequence is exact: $$\xymatrix{0\ar[r] &\Omega^2(\mathbb{F}_p) \ar[r] &\displaystyle P_1 \ar[r]^-\partial & \mathbb{F}_p[G] \ar[r]^-\varepsilon & \mathbb{F}_p \ar[r] &0}.$$
\end{lemma}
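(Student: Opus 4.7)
The plan is to identify the given sequence as the concatenation of the first two steps in a minimal projective resolution of $\mathbb{F}_p$ as an $\mathbb{F}_p[G]$-module. Since $G$ is a finite $p$-group, the group algebra $\mathbb{F}_p[G]$ is a local ring whose unique maximal (two-sided) ideal is the augmentation ideal $I_G = \ker(\varepsilon)$, and $\mathbb{F}_p$ is its unique simple module up to isomorphism. So the projective cover of $\mathbb{F}_p$ is $\varepsilon\colon\mathbb{F}_p[G]\twoheadrightarrow\mathbb{F}_p$, and consequently $\Omega(\mathbb{F}_p)=I_G$.

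Next I would show that $\partial$ has image exactly $I_G$: its image is a left ideal containing each $\sigma_i-1$, and the identity $gh-1=g(h-1)+(g-1)$ shows by induction on word length that every element $g-1$ for $g\in G$ lies in the left ideal generated by $\sigma_1-1,\dots,\sigma_d-1$. Since the $g-1$ span $I_G$ over $\mathbb{F}_p$, we get $\operatorname{image}(\partial)=I_G$, which gives exactness at $\mathbb{F}_p[G]$ and at $\mathbb{F}_p$, and exactness at $P_1$ is the defining relation $\ker(\partial)=\ker(\partial)$ together with the identification below. Exactness at $\Omega^2(\mathbb{F}_p)$ is automatic once we verify the leftmost map is injective, which will follow from the kernel identification.

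The heart of the proof is showing that $\partial\colon P_1\twoheadrightarrow I_G$ is a \emph{minimal} projective cover, for then $\ker(\partial)=\Omega(I_G)=\Omega^2(\mathbb{F}_p)$. By Nakayama's lemma in the local ring $\mathbb{F}_p[G]$, minimality is equivalent to the assertion that the images $\overline{\sigma_i-1}$ form an $\mathbb{F}_p$-basis of $I_G/I_G^2$. The classical isomorphism $g\mapsto g-1\bmod I_G^2$ induces $G^{\mathrm{ab}}/p\,G^{\mathrm{ab}}\xrightarrow{\sim} I_G/I_G^2$, and since $G$ is a $p$-group, $G^{\mathrm{ab}}/p\,G^{\mathrm{ab}}\cong G/\Phi(G)$. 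By the Burnside basis theorem, any minimal generating set $\sigma_1,\dots,\sigma_d$ of $G$ projects to a basis of $G/\Phi(G)$, so $\overline{\sigma_i-1}$ form a basis of $I_G/I_G^2$. Hence $\partial$ is a minimal projective cover of $I_G$ and $\ker(\partial)=\Omega(I_G)=\Omega^2(\mathbb{F}_p)$, yielding the exact sequence.

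The main technical obstacle is the minimality verification, which packages together two nontrivial inputs: Nakayama's lemma over the local ring $\mathbb{F}_p[G]$ and the identification $I_G/I_G^2\cong G/\Phi(G)$ via the Burnside basis theorem. Once those are in hand, everything else is either formal (surjectivity of $\varepsilon$, kernel computations) or a direct consequence of the definition of the Heller operator.
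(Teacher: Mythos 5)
Your proof is correct. The paper does not prove this lemma at all --- it is quoted directly from Carlson with only a citation --- so there is nothing internal to compare against; your argument is the standard construction of the first two terms of the minimal projective resolution of $\mathbb{F}_p$, and all the ingredients check out: $\mathbb{F}_p[G]$ is local with radical the augmentation ideal $I_G$, so $\Omega(\mathbb{F}_p)=I_G$; the identity $gh-1=g(h-1)+(g-1)$ (together with $g^{-1}-1=-g^{-1}(g-1)$) gives surjectivity of $\partial$ onto $I_G$; and the Burnside basis theorem combined with the isomorphism $G/\Phi(G)\simeq I_G/I_G^2$ shows the cover is minimal in the sense the paper uses (no free summand of $P_1$ inside $\ker(\partial)$, which over the local ring $\mathbb{F}_p[G]$ is equivalent to $\ker(\partial)\subseteq I_G P_1$), whence $\ker(\partial)=\Omega(I_G)=\Omega^2(\mathbb{F}_p)$.
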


Given our interest in module decompositions, it will be useful for us to know that $\Omega^2(\mathbb{F}_p)$ and $\Omega^{-2}(\mathbb{F}_p)$ are indecomposable.  Since $\mathbb{F}_p$ is indecomposable, this follows from

\begin{lemma}\label{le:Omega2.is.indecomposable}
Let $G$ be a finite $p$-group, and $M$ a finitely-generated $\mathbb{F}_p[G]$-module.  If $M$ is indecomposable, then $\Omega^n(M)$ is indecomposable for all $n \in \mathbb{Z}$.
\end{lemma}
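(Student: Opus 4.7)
The plan is to exploit the bookkeeping properties of the Heller operator $\Omega$ recorded just above the lemma, namely (a) additivity $\Omega^n(M \oplus N) \simeq \Omega^n(M) \oplus \Omega^n(N)$, (b) composition $\Omega^{n+m}(M) \simeq \Omega^n(\Omega^m(M))$, (c) the characterization $\Omega^k(N) = 0$ if and only if $N$ is projective, and (d) the fact that $\Omega^k(N)$ contains no nonzero projective submodule. The goal is to transport a hypothetical nontrivial decomposition of $\Omega^n(M)$ back through $\Omega^{-n}$ to a decomposition of $M$.

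First I would dispose of the projective case: if $M$ is projective (and indecomposable), then by (c) we have $\Omega^n(M) = 0$ for every $n \in \mathbb{Z}$, a degenerate situation. So assume $M$ is non-projective and indecomposable. The existence of a decomposition $M \simeq \Omega^0(M) \oplus Q$ with $Q$ projective, combined with the indecomposability and non-projectivity of $M$, forces $Q = 0$ and hence $\Omega^0(M) \simeq M$.

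Now suppose for contradiction that $\Omega^n(M) \simeq A \oplus B$ with both $A$ and $B$ nonzero. Applying $\Omega^{-n}$ and using (a) and (b) gives
$$M \simeq \Omega^0(M) \simeq \Omega^{-n}(\Omega^n(M)) \simeq \Omega^{-n}(A) \oplus \Omega^{-n}(B).$$
Because $M$ is indecomposable, one of $\Omega^{-n}(A)$ or $\Omega^{-n}(B)$, say the first, must vanish. By (c) this forces $A$ to be projective. But then $A$ is a nonzero projective submodule of $\Omega^n(M)$ (it appears as a direct summand), contradicting (d). Hence $\Omega^n(M)$ admits no such decomposition.

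The main point where some care is needed is ensuring $\Omega^{-n}(\Omega^n(M)) \simeq M$ uniformly for every $n \in \mathbb{Z}$; this hinges on the reduction to non-projective $M$ (so $\Omega^0(M) \simeq M$) and on the composition rule applied whether $n$ is positive, negative, or zero. Once that identification is in hand, the rest is just the pigeonhole offered by indecomposability together with the incompatibility of projectivity with being a submodule of $\Omega^n(M)$.
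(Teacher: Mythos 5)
Your proof is correct and follows essentially the same strategy as the paper's: transport a hypothetical decomposition of $\Omega^n(M)$ back through the inverse Heller operator, use indecomposability of $M$ to force one summand's image to vanish, conclude that summand is projective, and contradict the fact that $\Omega^n(M)$ has no nonzero projective submodules. The only (harmless) organizational differences are that you treat general $n$ in one step via the composition rule rather than reducing to $n=\pm 1$, and you split off the projective case at the outset so that $Q=0$ and $\Omega^0(M)\simeq M$ are settled before the main argument.
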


\begin{proof}
Since $\Omega^n(M)$ is defined in terms of iterated applications of $\Omega^1$ or $\Omega^{-1}$, it suffices to prove the result for $\Omega^1(M)$ and $\Omega^{-1}(M)$.  We will focus on the former, though the proof of the latter is analogous.

Suppose then that $\Omega(M) = A\oplus B$.  Applying $\Omega^{-1}$ we find $\Omega^0(M) \simeq \Omega^{-1}(A)\oplus \Omega^{-1}(B)$.  But then there is some projective $Q$ with $$M \simeq \Omega^0(M) \oplus Q \simeq \Omega^{-1}(A)\oplus \Omega^{-1}(B) \oplus Q.$$  Since $M$ is indecomposable we know that two of these summands are trivial.  If $Q \neq 0$ then we must have both $\Omega^{-1}(A)=0$ and $\Omega^{-1}(B)=0$, which implies that $A$ and $B$ are projective.  But $\Omega(M)$ cannot have any nontrivial projective submodules, and so we conclude that $A=B=0$ in this case.  This gives $\Omega(M) = 0$, which is indeed indecomposable.  Otherwise we have $Q = 0$, and one of $\Omega^{-1}(A)$ or $\Omega^{-1}(B)$ is trivial; without loss, assume the former.  Again we conclude that $A$ must be projective, which again implies $A=0$.  Hence our decomposition becomes $\Omega(M) = 0 + B$.
%
\end{proof}

\subsection{The Frattini module}\label{sec:frattini}

We provide a survey of some results connected to Frattini covers of profinite groups.  The results from this section can readily be found in more detail in \cite{FJ}.  

The Frattini subgroup of $G$ is $\Phi(G) = \bigcap H$, where the intersection is taken over all maximal, proper, closed subgroups $H$ of $G$.  In the case where $G$ is a pro-$p$ group, one has $\Phi(G) = G^p[G,G]$, the topological subgroup of $H$ generated by $p$th powers and commutators.  Hence in this context $\Phi(G)$ can be thought of as the minimal subgroup $N$ of $G$ for which $G/N$ is elementary $p$-abelian.  

A Frattini cover of a group $G$ consists of a group $X$ and a surjection $\theta:X \twoheadrightarrow G$ so that $\ker(\theta) \subseteq \Phi(G)$.  We can then impose an order on Frattini covers as follows: if $\theta_1:X_1 {\twoheadrightarrow} G$ and $\theta_2:X_2 {\twoheadrightarrow} G$ are two Frattini covers, then $\theta_1 \geq \theta_2$ if there exists some $\psi: X_1 \to X_2$ so that 
$$\xymatrix{
X_1 \ar[rr]^-{\psi} \ar@{->>}[rd]_-{\theta_1} && X_2 \ar@{->>}[ld]^-{\theta_2}\\
&G&}
$$ commutes.  (One can in fact argue that the map $\psi$ is therefore surjective.) With this ordering, one can show that up to isomorphism there is a unique maximal Frattini cover $\widetilde{G}$ of $G$, which is called the universal Frattini cover.  Our interest in universal Frattini covers is in the context  of finite $p$-groups.  In the case that $G$ is a finite $p$-group, one can show that the universal Frattini cover is the free pro-$p$ group on $d(G)$ generators (\cite[Cor.~22.7.8]{FJ}), where here $d(G)$ denotes the minimal number of generators of $G$. 

More specifically, if $G$ is a finite $p$-group and $\theta: \widetilde{G} \twoheadrightarrow G$ its universal Frattini cover, we will later be interested in a particular $\mathbb{F}_p[G]$-module related to $\widetilde{G}$, namely $M_0:=\ker(\theta)/\Phi(\ker(\theta))$.  (The action of $G$ on $M_0$ comes by conjugation.)  The module $M_0$ is the first in a family of modules known as the Frattini modules, which are built up along the tower $\widetilde{G} \twoheadrightarrow G$.  Fortunately, the structure of $M_0$ (and, indeed, the other Frattini modules) has been computed in terms of the Heller operator on the trivial $\mathbb{F}_p[G]$-module $\mathbb{F}_p$.

\begin{proposition}[{\cite{Gaschutz} or \cite[Lem.~2.3]{Fried95}}]\label{prop:Gaschutz}
As an $\mathbb{F}_p[G]$-module, we have $M_0 \simeq \Omega^2(\mathbb{F}_p)$.
\end{proposition}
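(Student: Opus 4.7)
The plan is to identify both $M_0$ and $\Omega^2(\mathbb{F}_p)$ as the kernel of the same $\mathbb{F}_p[G]$-linear map. By Lemma \ref{le:exact.sequence.for.Heller}, after fixing a minimal generating set $\sigma_1,\ldots,\sigma_d$ of $G$, we have $\Omega^2(\mathbb{F}_p) = \ker\partial$, where $\partial\colon \mathbb{F}_p[G]^d \to \mathbb{F}_p[G]$ is the map $c_i \mapsto \sigma_i - 1$. So it suffices to construct an exact sequence
$$0 \to M_0 \to \mathbb{F}_p[G]^d \xrightarrow{\partial} \mathbb{F}_p[G] \xrightarrow{\varepsilon} \mathbb{F}_p \to 0$$
of $\mathbb{F}_p[G]$-modules with this same middle map.

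Lift the $\sigma_i$ to a minimal generating set $\tilde\sigma_1,\ldots,\tilde\sigma_d$ of $\widetilde G$, possible since $\widetilde G$ is the free pro-$p$ group of rank $d$. A basic property of free pro-$p$ groups is that the augmentation ideal of the completed group algebra $\mathbb{F}_p[[\widetilde G]]$ is a free left module on the elements $\tilde\sigma_i - 1$; this yields a length-one free resolution
$$0 \to \mathbb{F}_p[[\widetilde G]]^d \xrightarrow{\tilde\partial} \mathbb{F}_p[[\widetilde G]] \xrightarrow{\tilde\varepsilon} \mathbb{F}_p \to 0$$
of the trivial module, with $\tilde\partial(c_i) = \tilde\sigma_i - 1$. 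Regarding $\mathbb{F}_p[G]$ as a right $\mathbb{F}_p[[\widetilde G]]$-module via $\theta$ and applying $\mathbb{F}_p[G] \otimes_{\mathbb{F}_p[[\widetilde G]]} -$, the long exact sequence of Tor (which collapses on the left since the middle term of the resolution is free) produces
$$0 \to \mathrm{Tor}_1^{\mathbb{F}_p[[\widetilde G]]}(\mathbb{F}_p[G], \mathbb{F}_p) \to \mathbb{F}_p[G]^d \xrightarrow{\partial} \mathbb{F}_p[G] \xrightarrow{\varepsilon} \mathbb{F}_p \to 0,$$
whose middle two maps take the desired form.

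It remains to identify the Tor group with $M_0$ as an $\mathbb{F}_p[G]$-module. Setting $N = \ker\theta$, we have $\mathbb{F}_p[G] \simeq \mathbb{F}_p \otimes_{\mathbb{F}_p[[N]]} \mathbb{F}_p[[\widetilde G]]$; since $\mathbb{F}_p[[\widetilde G]]$ is free as a right $\mathbb{F}_p[[N]]$-module (of rank $|G|$, via a choice of coset representatives of $N$ in $\widetilde G$), a change-of-rings argument yields
$$\mathrm{Tor}_1^{\mathbb{F}_p[[\widetilde G]]}(\mathbb{F}_p[G], \mathbb{F}_p) \simeq \mathrm{Tor}_1^{\mathbb{F}_p[[N]]}(\mathbb{F}_p, \mathbb{F}_p) = H_1(N, \mathbb{F}_p) = N/\Phi(N) = M_0,$$
with the conjugation action of $G$ on $M_0$ matching the $G$-action inherited from the Tor computation.

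The most delicate step is verifying this $G$-equivariance; the cleanest way to make it transparent is to realize the embedding $M_0 \hookrightarrow \mathbb{F}_p[G]^d$ concretely via Fox derivatives. For $n \in N$, write $n - 1 = \sum_i a_i(n)(\tilde\sigma_i - 1)$ uniquely in $\mathbb{F}_p[[\widetilde G]]$ (using freeness of the augmentation ideal), and send $n\Phi(N) \mapsto (\overline{a_1(n)}, \ldots, \overline{a_d(n)})$, where the bars denote images in $\mathbb{F}_p[G]$. Using $\theta(n) = 1$ for $n \in N$ together with standard derivation identities, one verifies that this map is well-defined on $M_0$, is $\mathbb{F}_p$-linear, is $G$-equivariant for the conjugation action, and lands in $\ker\partial$. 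Combining with Lemma \ref{le:exact.sequence.for.Heller} and checking surjectivity onto $\ker\partial$ (for instance by showing both modules have the same $\mathbb{F}_p$-dimension, or by reversing the construction) completes the proof.
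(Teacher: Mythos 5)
The paper offers no proof of this proposition at all --- it is imported wholesale from Gasch\"utz and from Fried's Lemma~2.3 --- so there is nothing internal to compare against; what you have written is essentially the standard argument from those sources, and it is correct in outline. The two pillars are sound: the augmentation ideal of $\mathbb{F}_p[[\widetilde G]]$ is indeed freely generated (as a left module) by the $\tilde\sigma_i-1$ when $\widetilde G$ is free pro-$p$, giving the length-one resolution, and the flat base change $\mathrm{Tor}_1^{\mathbb{F}_p[[\widetilde G]]}(\mathbb{F}_p[G],\mathbb{F}_p)\simeq H_1(N,\mathbb{F}_p)=N/\Phi(N)$ is legitimate because $\mathbb{F}_p[[\widetilde G]]$ is free over $\mathbb{F}_p[[N]]$ on either side ($N$ being open and normal); since all modules in sight are finite, the completed-versus-ordinary Tor issue is harmless. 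Three spots are compressed but fillable. First, watch the handedness: you invoke freeness of $\mathbb{F}_p[[\widetilde G]]$ as a \emph{right} $\mathbb{F}_p[[N]]$-module, but the flatness you actually use when transporting a resolution of the trivial right $\mathbb{F}_p[[N]]$-module is freeness as a \emph{left} module (both hold, so no harm done). Second, the $G$-equivariance: your Fox-derivative map does work --- from $D_i(uv)=D_i(u)+uD_i(v)$ and $D_i(g^{-1})=-g^{-1}D_i(g)$ one gets $\overline{D_i(gng^{-1})}=\bar g\,\overline{D_i(n)}$ in $\mathbb{F}_p[G]$ precisely because $\theta(gng^{-1})=1$, which is the computation you should display. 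Third, your closing sentence leaves bijectivity slightly open: the cleanest finish is to note that the Fox map $N/\Phi(N)\to\ker\partial$ is injective (it is the map induced on $\mathrm{Tor}_1$ by the identifications you set up, or one argues directly via $I_N/I_N^2\simeq N/\Phi(N)$), and then Nielsen--Schreier gives $\dim_{\mathbb{F}_p}N/\Phi(N)=(d-1)|G|+1=\dim_{\mathbb{F}_p}\ker\partial$ --- the same count the paper itself performs for $\Omega^2(\mathbb{F}_p)$ in Section~3 --- so the map is an isomorphism. With those details supplied, your argument is a complete and self-contained proof of a statement the paper only cites.
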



\subsection{Field-theoretic properties}\label{sec:field.stuff}

Recall that $J(K)$ is the notation we choose for the parameterizing space of elementary $p$-abelian extensions of $K$. When discussing elements of the module $J(K)$, we will write $[k]$ for the class of $J(K)$ represented by an element $k \in K$.  If we need to consider elements from $J(L)$ for some other field $L$, we will express such elements in the form $[l]_L$.  We will adopt the convention of writing $J(K)$ with an additive $\mathbb{F}_p[\Gal(K/F)]$-structure, even in those cases where $J(K)$ is multiplicative.  Not only does this allow for a more uniform presentation of the results, but it avoids cumbersome exponentiation that is difficult to read.  The trivial element in $J(K)$ will therefore be denoted $[0]$.

In the case where $\text{char}(K) = p$, Artin-Schreier theory tells us that $J(K) = K/\wp(K)$, where $\wp(K) = \{k^p-k: k \in K\}$.  If we let $\theta_a$ denote a root of $x^p-x-a$ for $a \in K$, then we have a correspondence between (finite) elementary $p$-abelian extensions of $K$ and (finite) $\mathbb{F}_p$-subspaces of $J(K)$:
\begin{align*}
N \subseteq J(K) &\longmapsto K(\theta_n: [n] \in N)/K\\
L/K&\longmapsto \{[k] \in J(K): k \in K \cap\wp(L)\}.
\end{align*}
When $\text{char}(K) \neq p$ and $K$ contains a primitive $p$th root of unity $\xi_p$,  Kummer theory tells us that $J(K) = K^\times/K^{\times p}$.  We have a correspondence between (finite) elementary $p$-abelian extensions of $K$ and (finite) $\mathbb{F}_p$-subspaces of $J(K)$:
\begin{align*}
N \subseteq J(K) &\longmapsto K(\root{p}\of{n}: [n] \in N)/K\\
L/K&\longmapsto \{[k] \in J(K): k \in K^\times \cap L^{\times p}\}.
\end{align*}
In either case, if $L$ is such an extension and $N$ is its corresponding subspace, then it is an easy exercise to show that $L/F$ is Galois if and only if $N$ is an $\mathbb{F}_p[G]$-submodule of $J(K)$. There is a natural perfect pairing $$\Gal(L/K) \times N \to \mathbb{F}_p$$ (though if $\text{char}(K) \neq p$ this requires an identification of $\mathbb{F}_p$ with $\langle \xi_p \rangle$).  In the Artin-Schreier case, this is given by $\langle \tau,[n]\rangle = \tau(\theta_n)-\theta_n$, whereas in the Kummer theoretic case it is given by $\langle \tau,[n]\rangle = \frac{\tau(\root{p}\of{n})}{\root{p}\of{n}}$.  Furthermore, the natural action of $\Gal(K/F)$ on the components of this perfect pairing is respected: for any $\sigma \in \Gal(K/F)$, $\tau \in \Gal(L/K)$ and $[n] \in N$ we have
$$\langle \sigma \tau \sigma^{-1},[n]^{\sigma}\rangle = \sigma\left(\langle \tau,[n]\rangle\right).$$  This means that $\Gal(L/K)$ and $N$ are dual to each other as $\mathbb{F}_p[\Gal(K/F)]$-modules.

We have already seen that for $\mathbb{F}_p[G]$-modules, the element $\sum_{g \in G} g$ plays an important role.  Observe that when $G = \Gal(L/E)$ with $\text{char}(E) = p$, then the action of of $\sum_{g \in G} g$ on elements of $L$ is the trace map $\text{Tr}_{L/E}:L \to E$.  Naturally this descends to a map $\text{Tr}_{L/E}: J(L) \to J(E)$, though for our purposes it will often be useful to consider the composition \begin{equation}\label{eq:trace.on.classes}\xymatrix{J(L)\ar[r]^-{\text{Tr}_{L/E}} & J(E) \ar[r]^-{\iota} & \frac{E+\wp(L)}{\wp(L)}=[E]_L},\end{equation} where $\iota$ is the natural inclusion.  In the same way, when $\text{char}(E) \neq p$ and $\xi_p \in E$, we will often be interested in the composition \begin{equation}\label{eq:norm.on.classes}\xymatrix{J(L)\ar[r]^-{\text{N}_{L/E}} & J(E) \ar[r]^-{\iota} & \frac{E^\times L^{\times p}}{L^{\times p}} = [E]_L}.\end{equation}  

\begin{lemma}\label{le:norm.trace.is.surjective}
Suppose that $G_E(p)$ is a free pro-$p$ group, and that either $\text{char}(E) = p$ or that $\text{char}(E)\neq p$ and $\xi_p \in E$. Let $\Gal(L/E)$ be a finite $p$-group.  If $\text{char}(E) = p$, then the trace map $\text{Tr}_{L/E}: L \to E$ is surjective.  If instead $\xi_p \in E$, then the norm map $N_{L/E}:L \to E$ is surjective.  

In particular, in either case we have that the action of $\sum_{\sigma \in \Gal(L/E)} \sigma$ which maps $J(L)$ to $[E]_L$ according to equation (\ref{eq:trace.on.classes}) or (\ref{eq:norm.on.classes}) is surjective.
\end{lemma}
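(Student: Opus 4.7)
The plan is to split the argument by the characteristic of $E$ and deduce the ``in particular'' statement at the end. In characteristic $p$, trace surjectivity requires no input beyond separability: since $L/E$ is finite and separable, the trace form $(x,y)\mapsto \Tr_{L/E}(xy)$ is a nondegenerate $E$-bilinear form on $L$, so $\Tr_{L/E}\colon L\to E$ is a nonzero $E$-linear map, and its image is therefore all of $E$. The free pro-$p$ hypothesis plays no role here; it enters only in the characteristic-not-$p$ case.

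For the norm case, I would induct on $|G|$, where $G = \Gal(L/E)$. When $G$ is nontrivial, pick a normal subgroup $H\lhd G$ of index $p$, set $M = L^H$, and factor $N_{L/E} = N_{M/E}\circ N_{L/M}$. For the factorization to feed back into the induction, $G_M(p)$ must again be free pro-$p$; this holds because $M(p) = E(p)$ (the compositum of $M$ with $E(p)$ is a pro-$p$ Galois extension of $E$), so $G_M(p) = \Gal(E(p)/M)$ is a closed subgroup of $G_E(p)$, and closed subgroups of free pro-$p$ groups are free pro-$p$ by Serre's theorem. The induction therefore reduces us to showing $N_{L/E}$ is surjective when $G$ is cyclic of order $p$.

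In this cyclic base case, the obstruction is $E^\times/N_{L/E}(L^\times) = \hat H^0(G, L^\times)$, and Tate periodicity for cyclic groups identifies this with $H^2(G, L^\times)$. To compute $H^2(G, L^\times)$, I would use the Hochschild--Serre spectral sequence for $1\to G_L(p)\to G_E(p)\to G\to 1$ with coefficients in $E(p)^\times$. Hilbert 90 gives the vanishing of $H^1$ of $E(p)^\times$ for both outer pro-$p$ groups, and the Kummer sequence $1\to \mu_p\to E(p)^\times\xrightarrow{(-)^p} E(p)^\times\to 1$ (exact on the right by maximality of $E(p)$), combined with the free pro-$p$ vanishing $H^2(G_E(p),\mu_p) = 0 = H^2(G_L(p), \mu_p)$ (again using Serre's theorem for the second identity) and the fact that pro-$p$ cohomology is $p$-primary torsion, yields $H^2(G_E(p), E(p)^\times) = 0 = H^2(G_L(p), E(p)^\times)$. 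The spectral sequence then collapses to its bottom row and produces $H^2(G, L^\times)\cong H^2(G_E(p), E(p)^\times) = 0$, as desired. I expect the main obstacle to be precisely this chain of vanishings for cohomology of $E(p)^\times$: it is here that the global free pro-$p$ hypothesis gets translated into concrete surjectivity of the norm on the finite sub-extension $L/E$.

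Finally, the ``in particular'' statement is immediate from what has been proved: the operator $\sum_{\sigma\in G}\sigma$ acts on $J(L)$ by $[\ell]\mapsto [\Tr_{L/E}(\ell)]$ in characteristic $p$ and by $[\ell]\mapsto [N_{L/E}(\ell)]$ otherwise, so surjectivity of $\Tr_{L/E}\colon L\to E$ (resp.\ of $N_{L/E}\colon L^\times\to E^\times$) descends to surjectivity of the induced map $J(L)\to[E]_L$.
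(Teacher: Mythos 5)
Your proof is correct, but it takes a genuinely different route from the paper's in both halves. For the trace, the paper also reduces to degree $p$ (choosing a central element of order $p$ to build a tower) but then exhibits an explicit preimage, computing $\Tr_{L/E}(-e\theta_a^{p-1})=e$ for $L=E(\theta_a)$; your appeal to the nondegeneracy of the trace form of a separable extension is shorter and, as you note, makes it transparent that freeness of $G_E(p)$ plays no role in characteristic $p$. For the norm, the paper's degree-$p$ case is a one-step deduction: freeness gives $H^2(G_E(p),\mathbb{F}_p)=0$, so the cup product $(e)\cup(\hat e)$ vanishes, and the standard criterion (cited to Srinivas) converts this into $e\in N_{L/E}(L^\times)$ for $L=E(\root{p}\of{\hat e})$. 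Your route instead passes through Tate periodicity and the vanishing of $H^2(G_E(p),E(p)^\times)$, obtained from the Kummer sequence on $E(p)^\times$ together with $\mathrm{cd}(G_E(p))\le 1$ and the fact that positive-degree cohomology of a pro-$p$ group with discrete coefficients is $p$-primary torsion; every link in that chain is sound, and in fact you need less than the full collapse you invoke --- injectivity of inflation $H^2(G,L^\times)\to H^2(G_E(p),E(p)^\times)$, which Hilbert 90 for $\Gal(E(p)/L)$ already supplies, suffices since the target vanishes. What your version buys is a reusable intermediate statement (the relative Brauer group $\Br(E(p)/E)$ is trivial); what the paper's buys is economy, since the cup-product/norm criterion packages the same cohomological input in one cited equivalence. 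One small point: both you and the paper rely on the standard fact that $M(p)=E(p)$ for any intermediate field $E\subseteq M\subseteq E(p)$, so that $G_M(p)$ is a closed subgroup of $G_E(p)$ and hence free; your parenthetical justification only establishes the easy inclusion $E(p)\subseteq M(p)$, so you should cite the $p$-closedness of $E(p)$ for the reverse inclusion.
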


\begin{proof}
Let $G=\Gal(L/E)$.  Our proof will be broken into cases depending on whether $\text{char}(E) = p$ or not.  In either case, though, we first show the claim for degree $p$ extensions.  With this in hand, the full result follows since we can view $L/E$ as a tower of such subextensions whose trace/norm map factors along the trace/norm maps from this tower.  To see this, recall that every $p$-group has a nontrivial center, and hence we may select an element $a_1 \in Z(G)$ of order $p$.  Galois theory then says that $\langle a_1 \rangle \lhd G$ corresponds to a subextension $L/\tilde{E}$ of degree $p$ so that $\Gal(\tilde{E}/E)$ is a $p$-group; one proceeds inductively from here.  It will also be useful to know that since $G_E(p)$ is a free pro-$p$ group, so too is $G_{E'}(p)$ for each field $E'$ within the tower over $L$. This follows because $G_{E'}(p)$ is a closed subgroup of $G_E(p)$, and hence is a free pro-$p$ group (see \cite[Th.~4.12, Th.~5.3]{Koch}).  So for the duration of the proof we simply assume that $L/E$ has $\Gal(L/E) \simeq \mathbb{Z}/p\mathbb{Z}$ and that $G_E(p)$ is a free pro-$p$ group.

First, we handle the case where $\text{char}(E) = p$.  Let $L = E(\theta_a)$, where $\theta_a$ is a root of $x^p-x-a$.  Write $\sigma$ for the generator of $\Gal(L/E)$ which satisfies $\sigma(\theta_a)=\theta_a+1$.  For a given $e \in E$, we now show that $\text{Tr}_{L/E}(-e\theta_a^{p-1}) = e$.  To do this, write $x+1 = \theta_a^{p-1}$, and use the fact that $\theta_a^p-\theta_a-a=0$ to conclude that $x = \frac{a}{\theta_a}$.  We therefore have $$x^p+x^{p-1}-a^{p-1} = \frac{a^p}{\theta_a^p}+\frac{a^{p-1}}{\theta_a^{p-1}}-a^{p-1}  =\frac{a^{p-1}}{\theta_a^p}\left(a+\theta_a-\theta_a^p\right) =0.$$ Examining the coefficient of $x^{p-1}$, we conclude $\text{Tr}_{L/E}(x)=-1$, and so $\text{Tr}_{L/E}(-e\theta_a^{p-1}) = -e\text{Tr}_{L/E}(x+1) = e$. (One can prove this result using Newton's identities; see \cite{MinacNewton}.)

Now suppose that $E$ satisfies $\text{char}(E) \neq p$ and $\xi_p \in E$.  Let $e \in E$ be given.  By Kummer theory this means there exists some $\hat e \in E$ with $L = E(\root{p}\of{\hat e})$.  Now since $G_E(p)$ is free, by \cite[Th.~4.12]{Koch} we have that $H^2(G_{E}(p),\mathbb{F}_p) = \{0\}$, so that $(e) \cup (\hat e)$ vanishes.  But it is well known (see, e.g., \cite[Lemma~8.4]{Srin}) that this vanishing is equivalent to $e \in N_{E(\root{p}\of{\hat e})/E}(E(\root{p}\of{\hat e}))=N_{L/E}(L)$, which is the desired result.  
\end{proof}

The previous result is a key ingredient in the following:

\begin{lemma}\label{le:elements.of.f.are.fixed.parts.of.free}
Let $E$ be a field so that $G_E(p)$ is a free pro-$p$ group, and so that either $\text{char}(E) = p$ or $\text{char}(E) \neq p$ and $\xi_p \in E$.  Let $L$ be a finite extension of $E$ so that $\Gal(L/E)$ is a $p$-group.  For any $e \in E$ for which $[e]_L \neq [0]_L$, there exists some $\ell_e \in L$ so that $\langle [\ell_e]_L \rangle^{\Gal(L/E)} = \langle [e]_L \rangle$, and so that $\langle [\ell_e]_L\rangle \simeq \mathbb{F}_p[\Gal(L/E)]$.
\end{lemma}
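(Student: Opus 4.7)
The plan is to use the surjectivity of the norm/trace on classes (Lemma \ref{le:norm.trace.is.surjective}) to lift $[e]_L$ to a class $[\ell_e]_L \in J(L)$, and then use Corollary \ref{cor:all.ideals.contain.norm} to conclude the submodule generated by $[\ell_e]_L$ must be free.

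More precisely, let $G = \Gal(L/E)$. By Lemma \ref{le:norm.trace.is.surjective}, the action of $\sum_{\sigma \in G} \sigma$ maps $J(L)$ onto $[E]_L$. Since $[e]_L$ is a nonzero element of $[E]_L$, I can choose $[\ell_e]_L \in J(L)$ with
\[
\Bigl(\sum_{\sigma \in G} \sigma \Bigr) \cdot [\ell_e]_L \;=\; [e]_L.
\]
This single element $[\ell_e]_L$ is the candidate; the rest of the proof consists in verifying the two required properties.

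For freeness, consider the annihilator ideal $I = \ann_{\mathbb{F}_p[G]}([\ell_e]_L)$, so that $\langle [\ell_e]_L\rangle \simeq \mathbb{F}_p[G]/I$. Because $\bigl(\sum_{g \in G} g\bigr)\cdot [\ell_e]_L = [e]_L \neq [0]_L$, we see that $\sum_{g \in G} g \notin I$. By Corollary \ref{cor:all.ideals.contain.norm}, any nontrivial left ideal of $\mathbb{F}_p[G]$ contains $\sum_{g \in G} g$, so we are forced to conclude $I = \{0\}$, yielding $\langle [\ell_e]_L \rangle \simeq \mathbb{F}_p[G]$ as desired.

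For the fixed-part computation, this isomorphism sends $[\ell_e]_L$ to the identity of $\mathbb{F}_p[G]$, and by Lemma \ref{eq:fixed.part.of.free.module} the fixed submodule of $\mathbb{F}_p[G]$ is $\langle \sum_{g \in G} g \rangle_{\mathbb{F}_p}$. Transporting this back to $\langle [\ell_e]_L \rangle$, the fixed submodule is generated by $\bigl(\sum_{g \in G} g\bigr)\cdot [\ell_e]_L = [e]_L$, which is precisely $\langle [e]_L \rangle$. The only real obstacle was producing the lift $[\ell_e]_L$, and that is handled entirely by the preceding lemma, which in turn rests on the freeness of $G_E(p)$; the rest is a clean application of the module-theoretic preliminaries.
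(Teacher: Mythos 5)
Your proposal is correct and follows essentially the same route as the paper's own proof: lift $[e]_L$ via the surjectivity of the norm/trace on classes, kill the annihilator using Corollary \ref{cor:all.ideals.contain.norm}, and read off the fixed submodule from Lemma \ref{eq:fixed.part.of.free.module}. No gaps.
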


\begin{proof}
By Lemma \ref{le:norm.trace.is.surjective}, let $[\ell_e]_L$ be given so that $\sum_{\sigma \in \Gal(L/E)} \sigma [\ell_e]_L = [e]_L$.  We claim now that $\langle [\ell_e]\rangle \simeq \mathbb{F}_p[\Gal(L/E)]$.  For this, note that any nontrivial ideal of $\mathbb{F}_p[\Gal(L/E)]$ must contain the element $\sum_{\sigma \in \Gal(L/E)} \sigma$ by Corollary \ref{cor:all.ideals.contain.norm}.  But since the image of $[\ell_e]$ under $\sum_{\sigma \in \Gal(L/E)} \sigma$ is the nontrivial element  $[e]_L$, it follows then that $\text{ann}_{\mathbb{F}_p[\Gal(L/E)]}([\ell_e]) = \{0\}$.  Hence $\langle [\ell_e] \rangle \simeq \mathbb{F}_p[\Gal(L/E)]$, and so $\langle [\ell_e] \rangle^{\Gal(L/E)} = \langle \sum_{\sigma \in \Gal(L/E)}\sigma [\ell_e] \rangle = \langle [e]_L\rangle$.
\end{proof}

\section{Proof of Theorem \ref{th:main.theorem}}\label{sec:proof.of.main.theorem}

In this section we prove the main result.  Recall that $K/F$ is an extension of fields with $\Gal(K/F)$ a finite $p$-group and so that $G_F(p)$ is a free, finitely-generated pro-$p$ group.  For the sake of simplicity, we will write $G$ in place of $\Gal(K/F)$ in this section.  We will let $n \in \mathbb{N}$ be given so that $p^n:=|J(F)|$.

We know that $G/\Phi(G)$ is the maximal elementary $p$-abelian quotient of $G$.  From the perspective of field theory, this means that $\text{Fix}(\Phi(G))$ is an extension of $F$ whose Galois group is elementary $p$-abelian; we can write $p^d = [\text{Fix}(\Phi(G)):F]$.  Hence there exists some $d$-dimensional subspace $\langle [a_1]_F,\cdots,[a_d]_F\rangle \subseteq J(F)$ to which $\text{Fix}(\Phi(G))$ corresponds.  Recall here that $d$ is equal to $d(G)$ --- the minimal number of generators for $G$.

\begin{lemma}\label{le:free.pro.p.part.above.G}
Let $\langle [a_1]_F,\cdots,[a_d]_F\rangle \subseteq J(F)$ correspond to the maximal elementary $p$-abelian extension of $F$ within $K$.  Then there exists an extension $\Lambda/K$ so that $\Gal(\Lambda/F)$ is the free pro-$p$ group on $d$ generators.  In particular the natural surjection $\theta:\Gal(\Lambda/F) \twoheadrightarrow \Gal(K/F)$ from Galois theory is the universal $p$-Frattini extension, and the tower of fields $\Lambda/K/F$ gives a solution to this embedding problem.
\end{lemma}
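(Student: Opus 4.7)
The plan is to exploit the projectivity of $G_F(p)$ together with the defining property of Frattini covers. Write $d = d(G)$, and let $\theta_0 \colon \widetilde{G} \twoheadrightarrow G$ denote the universal Frattini cover of $G$. Because $G$ is a finite $p$-group, the result of Section \ref{sec:frattini} (citing \cite[Cor.~22.7.8]{FJ}) identifies $\widetilde{G}$ with the free pro-$p$ group on $d$ generators. The extension $K/F$ furnishes a natural surjection $\pi \colon G_F(p) \twoheadrightarrow G$, and the goal is to produce a surjection $\tilde\pi \colon G_F(p) \twoheadrightarrow \widetilde{G}$ lifting $\pi$; once this is done, the fixed field $\Lambda := F(p)^{\ker(\tilde\pi)}$ does the job.

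To obtain the lift I would invoke projectivity. Since $G_F(p)$ is free pro-$p$, it is projective in the category of pro-$p$ groups, so there exists a continuous homomorphism $\tilde\pi \colon G_F(p) \to \widetilde{G}$ with $\theta_0 \circ \tilde\pi = \pi$. I would then show that $\tilde\pi$ is in fact surjective: setting $H := \tilde\pi(G_F(p))$, the surjectivity of $\pi$ gives $H \cdot \ker(\theta_0) = \widetilde{G}$, and the Frattini cover condition $\ker(\theta_0) \subseteq \Phi(\widetilde{G})$ then yields $H \cdot \Phi(\widetilde{G}) = \widetilde{G}$. The non-generator property of $\Phi$ (elements of the Frattini subgroup may be omitted from any generating set) forces $H = \widetilde{G}$, as desired.

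With surjectivity in hand, setting $\Lambda := F(p)^{\ker(\tilde\pi)}$ gives $\Gal(\Lambda/F) \cong G_F(p)/\ker(\tilde\pi) \cong \widetilde{G}$, which is the free pro-$p$ group on $d$ generators. The inclusion $\ker(\tilde\pi) \subseteq \ker(\pi) = \Gal(F(p)/K)$ ensures $K \subseteq \Lambda$, and under the resulting identification the natural restriction map $\theta \colon \Gal(\Lambda/F) \twoheadrightarrow \Gal(K/F)$ becomes $\theta_0$, so that the tower $\Lambda/K/F$ realizes the universal $p$-Frattini cover of $G$.

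The argument is really just a matter of assembling the correct general facts; there is no substantial obstacle. If I had to name the subtlest point, it would be the surjectivity of the lift $\tilde\pi$, which is not automatic from projectivity alone but follows from the standard Frattini non-generator observation above. Everything else is essentially formal: projectivity of $G_F(p)$ provides the lift, and the Galois correspondence translates the pro-$p$ group extension $\widetilde{G} \twoheadrightarrow G$ into the desired field tower $\Lambda/K/F$.
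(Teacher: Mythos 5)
Your argument is correct, and it takes a genuinely different route from the paper's. The paper works with explicit generators: it completes $\{[a_1]_F,\dots,[a_d]_F\}$ to a basis of $J(F)$, picks a dual minimal generating set $\sigma_1,\dots,\sigma_n$ of $G_F(p)$, shows that the closed subgroup $H=\langle\sigma_1,\dots,\sigma_d\rangle$ is itself free pro-$p$ of rank $d$ and is also a quotient of $G_F(p)$ via the retraction killing $\sigma_{d+1},\dots,\sigma_n$, and takes $\Lambda$ to be the fixed field of the kernel of that retraction. You instead go straight to the universal Frattini cover $\theta_0\colon\widetilde{G}\twoheadrightarrow G$ and use the projectivity of the free pro-$p$ group $G_F(p)$ to lift $\pi\colon G_F(p)\twoheadrightarrow G$ to $\tilde\pi\colon G_F(p)\to\widetilde{G}$, with surjectivity of $\tilde\pi$ supplied by the Frattini non-generator argument. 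Your approach buys two things: the containment $K\subseteq\Lambda$ is immediate from $\ker\tilde\pi\subseteq\ker\pi$ (in the paper's construction this containment depends on choosing the lifts $\sigma_j$, $j>d$, so that they restrict trivially to $K$, a point the paper leaves implicit), and you never use the finite generation of $G_F(p)$ or the completed basis of $J(F)$, only its freeness. What the paper's construction buys is an explicit realization of $\Gal(\Lambda/F)$ inside $G_F(p)$ as the subgroup generated by the elements dual to $[a_1]_F,\dots,[a_d]_F$, keeping the Kummer/Artin--Schreier data in view. Two small points to make explicit in your write-up: cite the lifting property of free pro-$p$ groups with respect to epimorphisms of pro-$p$ groups (e.g.\ \cite[Th.~4.8]{Koch} or \cite[Cor.~22.7.6]{FJ}), and note that $\tilde\pi(G_F(p))$ is closed --- being the continuous image of a compact group --- so that the Frattini argument for profinite groups applies to it.
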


\begin{proof}
We may complete $\{[a_1]_F,\cdots,[a_d]_F\}$ to an $\mathbb{F}_p$-basis $\{[a_1]_F,\cdots,[a_d]_F,[a_{d+1}]_F,\cdots,[a_n]_F\}$ of $J(F)$, and we let $\sigma_1,\cdots,\sigma_n \in G_F(p)$ be dual to these elements.  Note that $\sigma_1,\cdots,\sigma_n$ form a minimal generating set of $G_F(p)$.  Now define $H=\langle \sigma_1,\cdots,\sigma_d\rangle$, the closed subgroup generated by those dual to $\{[a_1]_F,\cdots,[a_d]_F\}$.  Then we see that $H$ is a free pro-$p$ group as well by showing that for any pro-$p$ group $T$ and any elements $t_1,\cdots,t_d \in T$, there exists a morphism of pro-$p$ groups which takes $\sigma_i$ to $t_i$ for all $1 \leq i \leq d$ (see, e.g., \cite[Th.~4.6]{Koch}). To see this, note that since $G_F(p)$ is a free pro-$p$ group on the generators $\{\sigma_i\}_{i=1}^n$ we can define a map $\varphi:G_F(p) \to T$ by setting $\varphi(\sigma_i) = t_i$ for all $1 \leq i \leq d$, and setting $\varphi(\sigma_j)=\text{id}_T$ for all $d+1 \leq j \leq n$.  But then the restriction $\varphi|_H:H \to T$ provides the desired map.

Note, however, that we then have that $H$ is isomorphic to a quotient of $G_F(p)$ as well --- one simply defines $\psi:G_F(p) \to H$ by setting $$\psi(\sigma_i) = \left\{\begin{array}{ll}\bar \sigma_i,&1 \leq i \leq d\\1,&d+1 \leq i \leq n.\end{array}\right.$$  We then let $\Lambda = \text{Fix}(\ker(\psi))$.

The fact that $\theta:\Gal(\Lambda/F) \twoheadrightarrow \Gal(K/F)$ is the universal $p$-Frattini extension follows from the fact that $\Gal(K/F)$ is a finite $p$-group of rank $d$, and $\Gal(\Lambda/F)$ is the free pro-$p$ group on $d$ generators.  Galois theory tells us  $\Lambda/K/F$ provides a solution to the embedding problem.
\end{proof}

Let $L$ be the maximal $p$-abelian extension of $K$ contained in $\Lambda$; in other words, let $L = \text{Fix}(\Phi(\Lambda/K))$.  Let $X$ be the associated $\mathbb{F}_p$-space in $J(K)$.  Observe that the maximality of $L$ means that $X$ must be closed under the action of $G$, and hence $X$ must be an $\mathbb{F}_p[G]$-submodule; as a consequence we have that $L/F$ is Galois.  

Now we know that $\Gal(\Lambda/K)$ is the kernel of $\theta:\Gal(\Lambda/F) \twoheadrightarrow \Gal(K/F)$ by Galois theory, and so $\Phi(\ker(\theta)) = \Phi(\Gal(\Lambda/K))$.  We know that the Frattini subgroup of $\Gal(\Lambda/K)$ corresponds to the maximal elementary $p$-abelian extension of $K$ within $\Lambda$, however, and so we have $\Phi(\ker(\theta)) = \Gal(\Lambda/L)$.  Hence --- in the notation of section \ref{sec:module} --- we have $M_0 = \Gal(\Lambda/K)/\Gal(\Lambda/L) \simeq \Gal(L/K)$, and so Proposition \ref{prop:Gaschutz} gives $\Gal(L/K) \simeq \Omega^2(\mathbb{F}_p)$.  Since $\Gal(L/K)$ is dual to $X$, we therefore have $X \simeq \Omega^{-2}(\mathbb{F}_p)$.


Recall that $[F]$ denotes the classes from $J(K)$ represented by elements of $F$, and let $\mathcal{I}$ be a basis for $[F]$.  For any given $[f] \in \mathcal{I}$, Lemma \ref{le:elements.of.f.are.fixed.parts.of.free} gives an element $k_f \in K$ so that $\langle [k_f] \rangle^{\Gal(K/F)} \simeq \langle [f] \rangle$ and $\langle [k_f] \rangle \simeq \mathbb{F}_p[G]$. We define $Y = \sum_{[f] \in \mathcal{I}} \langle [k_f]\rangle$.  Lemma \ref{le:exclusion.lemma} tells us that in fact $Y = \bigoplus_{i \in \mathcal{I}} \langle [k_f]\rangle$, and Lemma \ref{le:elements.of.f.are.fixed.parts.of.free} tells us that $Y$ is free.

To see that $X + Y = X \oplus Y$, we argue that $X \cap [F]$ is trivial; if we can show this, then $$Y^G = \left(\bigoplus_{i \in \mathcal{I}} \langle k_f \rangle\right)^G = \bigoplus_{i\in \mathcal{I}} \langle [k_f]\rangle^G = \bigoplus_{i \in \mathcal{I}} \langle [f] \rangle = [F]$$ implies $X^G \cap Y^G$ is trivial, whence $X+Y = X\oplus Y$ by Lemma \ref{le:exclusion.lemma}.  Now to see that $X \cap [F]$ is trivial, note that if there exists $f \in F$ so that $[f] \in X$, then the extension of $F$ corresponding to $\langle [a_1]_F,\cdots,[a_d]_F,[f]_F\rangle$ would be contained in $\Lambda$.  Moreover we must have $[f]_F$ independent from $\{[a_1]_F,\cdots,[a_d]_F\}$ since $[f] \neq 0$ whereas $[a_i]=0$ for all $i$.  Hence $\Gal(\Lambda/F)$ contains a quotient isomorphic to $(\mathbb{Z}/p\mathbb{Z})^{\oplus d+1}$.  On the other hand, if we let $F'$ be the extension of $F$ corresponding to $\langle [a_1]_F,\cdots,[a_d]_F\rangle$ then the Galois-theoretic surjection $\Gal(\Lambda/F) \twoheadrightarrow \Gal(F'/F)$ must be the universal $p$-Frattini cover since $\Gal(F'/F)$ is a $p$-group on $d$ generators, and $\Gal(\Lambda/F)$ is a free pro-$p$ group on $d$ generators by construction.  Hence the maximal elementary $p$-abelian quotient of $\Gal(\Lambda/F)$ is rank $d$, contrary to above.  

Finally, we show that $X\oplus Y = J(K)$ by a dimension count.  Since $Y$ is free with $Y^G = [F]$ and $\dim[F] = n-d$, we see $\dim(Y) = |G|(n-d)$.  The exact sequence from Lemma \ref{le:exact.sequence.for.Heller} gives us $\dim(\Omega^2_{\mathbb{F}_p[G]}(\mathbb{F}_p)) = d|G|-|G|+1 = (d-1)|G|+1$ . So we have $\dim(X\oplus Y) = |G|(n-1)+1$. On the other hand, by Schreier's theorem (\cite[Example~6.3]{Koch}) since $n = \dim J(F)$ then we have $\dim J(K) = |G|(n-1)+1$.  Since $X\oplus Y \subseteq J(K)$ by construction, the result follows.

\section{Computing Galois groups}\label{sec:computing.Galois.groups}

In the previous section we computed the module structure of $J(K)$ and saw that it has a single summand which is not free.  In this section we ask what we can say about the Galois groups of field extensions which correspond to modules in $J(K)$.  We begin with the case of free submodules; note that this result does not rely on the freeness of $G_F(p)$.

\begin{proposition}Suppose  $G$ is a finite $p$-group and that $N \subseteq J(K)$ satisfies $N \simeq \bigoplus_{i=1}^n \mathbb{F}_p[G]$.  Let $L$ be the extension of $K$ corresponding to $N$.  Then $\Gal(L/F) \simeq \left(\mathbb{F}_p[G]\right)^{\oplus n} \rtimes G$.
\end{proposition}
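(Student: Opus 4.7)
The plan is to analyze the natural short exact sequence
$$1 \to \Gal(L/K) \to \Gal(L/F) \to G \to 1$$
coming from Galois theory (which makes sense because $N$ being a $G$-submodule of $J(K)$ forces $L/F$ to be Galois). My goal will be to identify the kernel as an $\mathbb{F}_p[G]$-module and then to show the sequence splits. First, I would invoke the perfect pairing recalled in Section \ref{sec:field.stuff}, which identifies $\Gal(L/K)$ with the $\mathbb{F}_p$-linear dual $N^*$ as $\mathbb{F}_p[G]$-modules. Since $\mathbb{F}_p[G]$ is a Frobenius algebra, it is self-dual as a left module over itself, and so
$$\Gal(L/K) \simeq N^{*} \simeq \left(\mathbb{F}_p[G]^{*}\right)^{\oplus n} \simeq \mathbb{F}_p[G]^{\oplus n}$$
as $\mathbb{F}_p[G]$-modules. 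One also has to check that the conjugation action of $G$ on $\Gal(L/K)$ (induced from any set-theoretic section of $\Gal(L/F) \twoheadrightarrow G$) is the $\mathbb{F}_p[G]$-action coming from this identification; this is precisely the equivariance statement $\langle \sigma\tau\sigma^{-1}, [n]^\sigma\rangle = \sigma(\langle \tau, [n]\rangle)$ recorded in Section \ref{sec:field.stuff}.

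Next I would argue that the above short exact sequence of groups splits. Since the kernel is abelian, such extensions are classified (with the specified $G$-action) by $H^2\bigl(G, \mathbb{F}_p[G]^{\oplus n}\bigr)$. Because cohomology commutes with finite direct sums, it suffices to show $H^2(G, \mathbb{F}_p[G]) = 0$. But $\mathbb{F}_p[G] \simeq \mathrm{Ind}_{\{1\}}^{G}\mathbb{F}_p$ as $\mathbb{F}_p[G]$-modules, so Shapiro's lemma gives
$$H^{i}\bigl(G, \mathbb{F}_p[G]\bigr) \simeq H^{i}\bigl(\{1\}, \mathbb{F}_p\bigr) = 0 \quad \text{for all } i \geq 1.$$
Hence the extension splits as a semidirect product, which identifies $\Gal(L/F)$ with $\mathbb{F}_p[G]^{\oplus n} \rtimes G$, with $G$ acting on $\mathbb{F}_p[G]^{\oplus n}$ by the regular representation on each factor.

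I expect the main (mild) obstacle to be verifying that the $G$-action on $\Gal(L/K)$ arising from the group extension really coincides with the $\mathbb{F}_p[G]$-action coming from Kummer/Artin-Schreier duality, so that the cohomology group $H^2$ that classifies the extension is literally $H^2(G, \mathbb{F}_p[G]^{\oplus n})$ with the regular action. Once this compatibility is in place, the remainder of the argument is essentially formal: self-duality of $\mathbb{F}_p[G]$ identifies the kernel, Shapiro's lemma kills the obstruction to splitting, and the semidirect product structure is forced by the action.
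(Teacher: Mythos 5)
Your proof is correct and follows essentially the same route as the paper: the paper's (much terser) argument likewise reduces to the vanishing of $H^2(G,P)$ for projective $\mathbb{F}_p[G]$-modules $P$, which forces the extension $1 \to \Gal(L/K) \to \Gal(L/F) \to G \to 1$ to be the split one. The details you supply --- identifying $\Gal(L/K)$ with $N^{*} \simeq \mathbb{F}_p[G]^{\oplus n}$ via the Kummer/Artin--Schreier pairing and checking that the conjugation action matches the module action --- are left implicit in the paper but are exactly the right points to verify.
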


\begin{proof}
We know that $H^i(G,P)=\{0\}$ for any projective $\mathbb{F}_p[G]$-module $P$.  Hence when $i=2$ we see that there is only a single extension of $G$ by $\left(\mathbb{F}_p[G]\right)^{\oplus n}$.
\end{proof}

Now we consider the $X$ summand; whereas free summands only have one possible Galois group, this time the module structure of $X$ is not enough to determine the Galois group.

\begin{lemma}\label{le:extension.problems.for.Omega2}
Suppose that $\Gal(L/K)$ is a finite $p$-group which is isomorphic to $\Omega^{2}(\mathbb{F}_p)$ as an $\mathbb{F}_p[\Gal(K/F)]$-module.  Then $\Gal(L/F)$ is one of two possible groups: one semi direct, and the other not.
\end{lemma}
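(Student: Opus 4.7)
The plan is to translate the lemma into a computation of group cohomology. Write $G = \Gal(K/F)$ and $A = \Gal(L/K) \simeq \Omega^{2}(\mathbb{F}_p)$. Since $A$ is abelian, the equivalence classes of group extensions
$$1 \to A \to \Gal(L/F) \to G \to 1$$
respecting the prescribed $\mathbb{F}_p[G]$-module structure on $A$ are classified by $H^{2}(G, A)$.

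The first step is to compute $H^{2}(G, \Omega^{2}(\mathbb{F}_p))$. I would use the exact sequence
$$0 \to \Omega^{2}(\mathbb{F}_p) \to P_{1} \to \mathbb{F}_p[G] \to \mathbb{F}_p \to 0$$
from Lemma \ref{le:exact.sequence.for.Heller}, splicing it into two short exact sequences through the augmentation ideal $I = \ker(\varepsilon)$. Since $P_{1}$ and $\mathbb{F}_p[G]$ are free, their higher cohomology vanishes, so the associated long exact sequences dimension-shift to
$$H^{2}(G, \Omega^{2}(\mathbb{F}_p)) \simeq H^{1}(G, I) \simeq \mathbb{F}_p,$$
where the final identification uses that $H^{0}(G, \mathbb{F}_p[G]) \to H^{0}(G, \mathbb{F}_p)$ sends $\sum_{g \in G} g$ to $|G| = 0$ in $\mathbb{F}_p$. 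Thus there are exactly $p$ equivalence classes of extensions, of which the zero class is split and yields the semidirect product $A \rtimes G$.

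The second step collapses the $p-1$ remaining classes into a single isomorphism type. For each scalar $c \in \mathbb{F}_p^{\times}$, multiplication by $c$ is an $\mathbb{F}_p[G]$-automorphism of $A$, and it acts on $H^{2}(G, A) \simeq \mathbb{F}_p$ by multiplication by $c$. Two extension classes related by such an automorphism produce isomorphic groups, since the automorphism of $A$ lifts to an isomorphism between the corresponding extensions. Because $\mathbb{F}_p^{\times}$ acts transitively on $\mathbb{F}_p \setminus \{0\}$, all nonzero classes yield the same group up to isomorphism. Altogether we obtain at most two isomorphism types: the split semidirect product, and one non-split group.

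The main obstacle I anticipate is verifying that these two candidates are genuinely non-isomorphic as abstract groups (and not merely as extensions), since in principle the non-split extension could happen to admit a different abelian normal subgroup isomorphic to $A$ with complement. To rule this out, I would exploit the indecomposability of $\Omega^{2}(\mathbb{F}_p)$ (Lemma \ref{le:Omega2.is.indecomposable}) and the fact that $H^{2}(G, A)$ is one-dimensional, so the induced action of $\mathrm{Aut}_{\mathbb{F}_p[G]}(A)$ on it factors through $\mathbb{F}_p^{\times}$ and hence cannot identify the zero orbit with the nonzero one. This forces any abstract isomorphism between the two candidates to induce an equivalence of extensions, contradicting that their cohomology classes differ by being zero versus nonzero.
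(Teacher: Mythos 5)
Your proposal is correct and follows essentially the same route as the paper: splicing the four-term sequence of Lemma \ref{le:exact.sequence.for.Heller} through the augmentation ideal (which is $\Omega(\mathbb{F}_p)$), dimension-shifting past the projective terms to get $H^2(G,\Omega^2(\mathbb{F}_p))\simeq H^1(G,I)\simeq\mathbb{F}_p$, and observing that the nonzero classes all yield isomorphic groups. Your explicit use of the scalar action of $\mathbb{F}_p^\times$ on $H^2(G,A)$ to collapse the nonzero classes is a detail the paper leaves implicit, and your final paragraph addresses a point (abstract non-isomorphism of the two candidates) that the lemma as stated does not actually require and the paper does not attempt.
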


\begin{proof}
We will show that $H^2(G,\Omega^{2}_{\mathbb{F}_p[G]}(\mathbb{F}_p)) \simeq \mathbb{F}_p$.  By Lemma \ref{le:exact.sequence.for.Heller}, if we let $P = \bigoplus_{i=1}^{d(G)-1} \mathbb{F}_p[G]$ we have the following pair of exact sequences
\begin{align}
\xymatrix{0\ar[r]&\Omega(\mathbb{F}_p) \ar[r] & \mathbb{F}_p[G] \ar[r]^-\varepsilon& \mathbb{F}_p \ar[r] & 0}\label{eq:1st.short.exact.sequence.for.Omega.2.and.1}\\
\xymatrix{0\ar[r]&\Omega^2(\mathbb{F}_p) \ar[r] & P \ar[r] & \Omega(\mathbb{F}_p) \ar[r] & 0.}\label{eq:2nd.short.exact.sequence.for.Omega.2.and.1}
\end{align}
If we consider the long exact sequence that corresponds to exact sequence (\ref{eq:2nd.short.exact.sequence.for.Omega.2.and.1}) we have $$\xymatrix{\displaystyle H^1\left(G,P\right) \ar[r] & H^1\left(G,\Omega(\mathbb{F}_p)\right) \ar[r]^-{\partial} & H^2\left(G,\Omega^2(\mathbb{F}_p)\right) \ar[r] & \displaystyle H^2\left(G,P\right)}.$$  But since higher cohomology groups (i.e., $i \neq 0$) vanish for projective modules, this gives $\partial:H^1(G,\Omega(\mathbb{F}_p)) \stackrel{\sim}{\to} H^2(G,\Omega^2(\mathbb{F}_p))$.  Hence the long exact sequence for cohomology applied to exact sequence (\ref{eq:1st.short.exact.sequence.for.Omega.2.and.1}) gives
$$
\xymatrix{
0 \ar[r]& H^0(G,\Omega(\mathbb{F}_p)) \ar[r] \ar@{=}[d] & H^0(G,\mathbb{F}_p[G]) \ar[r] \ar@{=}[d]& H^0(G,\mathbb{F}_p) \ar[r] & H^1(G,\Omega(\mathbb{F}_p)) \ar[r] & H^1(G,\mathbb{F}_p[G])\ar@{=}[d]\\
&\Omega(\mathbb{F}_p)^G & \mathbb{F}_p[G]^G & &&0.
}
$$ We know that $0 \neq \Omega(\mathbb{F}_p) \subseteq \mathbb{F}_p[G]$, and so $\Omega(\mathbb{F}_p)^G \simeq \mathbb{F}_p[G]^G$ by Lemma \ref{eq:fixed.part.of.free.module} and Corollary \ref{cor:all.ideals.contain.norm}. Hence we have $$\mathbb{F}_p \simeq H^0(G,\mathbb{F}_p) \simeq H^1(G,\Omega(\mathbb{F}_p)) \simeq H^2(G,\Omega^2(\mathbb{F}_p)).$$

Of course $H^2(G,\Omega^2(\mathbb{F}_p))$ parameterizes the extensions of $G$ by $\Omega^2(\mathbb{F}_p)$; the trivial extension (the semi-direct product) corresponds to the trivial class, and the nonzero classes each give rise to isomorphic groups.
\end{proof}

The natural question suggested by Lemma \ref{le:extension.problems.for.Omega2} is to compute the group $\Gal(L/F)$ for the field $L$ that corresponds to the summand of $J(K)$ isomorphic to $\Omega^{-2}(\mathbb{F}_p)$.  We know that it is only one of two possibilities, but is it the split or the non-split extension?  By construction, the maximal elementary $p$-abelian quotient of $\Gal(L/F)$ is the same as the maximal elementary $p$-abelian quotient of $\Gal(K/F)$, and so we see that $\Gal(L/F)$ and $\Gal(K/F)$ have the same number of generators. Hence it must be that $\Gal(L/F)$ is the nonsplit extension.

\section{Module structures of $\Omega^{2}(\mathbb{F}_p)$ and $\Omega^{-2}(\mathbb{F}_p)$ when $\Gal(K/F) \simeq \mathbb{Z}/p\mathbb{Z}\oplus\mathbb{Z}/p\mathbb{Z}$}\label{sec:bicyclic.example}

In this section we give presentations for $\Omega^{2}(\mathbb{F}_p)$ and $\Omega^{-2}(\mathbb{F}_p)$ when the underlying group is $\langle \sigma_1,\sigma_2 \rangle \simeq \mathbb{Z}/p\mathbb{Z}\oplus\mathbb{Z}/p\mathbb{Z}$.  To do this, we take advantage of Lemma \ref{le:exact.sequence.for.Heller}, which gives us a method for realizing $\Omega^2(\mathbb{F}_p)$ through the exact sequence $$\xymatrix{0\ar[r] &\Omega^2(\mathbb{F}_p) \ar[r] &\displaystyle P_1 \ar[r]^-\partial & \mathbb{F}_p[\mathbb{Z}/p\mathbb{Z}\oplus\mathbb{Z}/p\mathbb{Z}] \ar[r]^-\varepsilon & \mathbb{F}_p \ar[r] &0};$$ in this case $P_1$ is generated by $c_1$ and $c_2$, and $\partial(c_i) = \sigma_i-1$.  Since $\Omega^2(\mathbb{F}_p) = \ker(\partial)$, we can quickly see three elements which are conspicuously in this set: $a_0=(\sigma_2-1)c_1-(\sigma_1-1)c_2$, as well as $a_1 = (\sigma_1-1)^{p-1}c_1$ and $a_2 = -(\sigma_2-1)^{p-1}c_2$.  (We have included the minus sign in the definition of $a_2$ to make some relations below simpler to state.)  Moreover, we can surmise the structure of these elements under the action of $\mathbb{F}_p[\mathbb{Z}/p\mathbb{Z}\oplus\mathbb{Z}/p\mathbb{Z}]$: for $\{i,j\}= \{1,2\}$ we have $$(\sigma_i-1)^{p-1}a_0 = (\sigma_j-1)a_i;$$ each $a_i$ generates a free $\mathbb{F}_p[\langle \sigma_j\rangle]$-module; and the module generated by $a_0$ is subject to the relation $(\sigma_1-1)^{p-1}(\sigma_2-1)^{p-1}a_0=0$.  As a consequence, an $\mathbb{F}_p$-spanning set for $\Omega^2(\mathbb{F}_p)$ in this case is given by $\{a_1,a_2\} \cup \{(\sigma_1-1)^k(\sigma_2-1)^\ell a_0: 0 \leq k,\ell \leq p-1 \text{ and }k+\ell < 2p-2\}$.  

In fact, we can prove that these elements form an $\mathbb{F}_p$-basis for $\Omega^2(\mathbb{F}_p)$ using a simple dimension count. From the proof of Theorem \ref{th:main.theorem} we know that $\dim_{\mathbb{F}_p}(\Omega^2(\mathbb{F}_p)) = (2-1)p^2+1 = p^2+1$.  This is precisely the number of elements in our spanning set above, and so they must  be $\mathbb{F}_p$-independent.

To give a visualization for this module, we can represent each of its $\mathbb{F}_p$-generators as a box, and then configure those boxes to depict simple $\mathbb{F}_p[\mathbb{Z}/p\mathbb{Z}\oplus\mathbb{Z}/p\mathbb{Z}]$-relations between them.  Specifically, if the box that represents an element $A$ is directly to the southwest of another box that represents an element $B$, this means that $B^{(\sigma_1-1)} = A$; if instead $B^{(\sigma_2-1)} = A$, then we draw $A$ directly to the southeast of box $B$.  
\comment{For example, consider $\mathbb{F}_p[\mathbb{Z}/p\mathbb{Z}\oplus\mathbb{Z}/p\mathbb{Z}]$ acting on itself by multiplication.  This module is generated by $1$, and has an $\mathbb{F}_p$-basis given by $\{(\sigma_1-1)^i(\sigma_2-1)^j: 0 \leq i,j \leq p-1\}$.  The module can then be viewed as the box depicted on the left side of Figure \ref{fig:example.diagrams}.  As another example, the module $\left\langle w | (\sigma_1-1)^i(\sigma_2-1)^jw=0  \text{ for }i+j \geq p+1\right\rangle$ is depicted (for $p=5$) on the right side of Figure \ref{fig:example.diagrams}.

\begin{figure}
\begin{tikzpicture}[x=0.75cm,y=0.75cm]
\draw (0,-1) node {$1$};
\draw[->] (-2,-1) -- node[sloped,above] {$\sigma_1-1$} (-4,-3);
\draw[->] (2,-1) -- node[sloped,above] {$\sigma_2-1$} (4,-3);
\draw (0,0) -- (-5,-5);
\draw (1,-1) -- (-4,-6);
\draw (2,-2) -- (-3,-7);
\draw (3,-3) -- (-2,-8);
\draw (4,-4) -- (-1,-9);
\draw (5,-5) -- (-0,-10);
\draw (0,0) -- (5,-5);
\draw (-1,-1) -- (4,-6);
\draw (-2,-2) -- (3,-7);
\draw (-3,-3) -- (2,-8);
\draw (-4,-4) -- (1,-9);
\draw (-5,-5) -- (-0,-10);

\draw (13,-1) node {$w$};
\draw (13,0) -- (8,-5);
\draw (14,-1) -- (9,-6);
\draw (15,-2) -- (11,-6);
\draw (16,-3) -- (13,-6);
\draw (17,-4) -- (15,-6);
\draw (18,-5) -- (17,-6);
\draw (13,0) -- (18,-5);
\draw (12,-1) -- (17,-6);
\draw (11,-2) -- (15,-6);
\draw (10,-3) -- (13,-6);
\draw (9,-4) -- (11,-6);
\draw (8,-5) -- (9,-6);
\end{tikzpicture}
\caption{Graphical depictions of the modules $\mathbb{F}_p[\mathbb{Z}/p\mathbb{Z}\oplus\mathbb{Z}/p\mathbb{Z}]$ and $\left\langle w | (\sigma_1-1)^i(\sigma_2-1)^jw=0 \text{ for }i+j \geq p+1\right\rangle$ (when $p=5$).}\label{fig:example.diagrams}
\end{figure}
}
With these conventions in play, the representation for $\Omega^2(\mathbb{F}_p)$ is then given on the left side of Figure \ref{fig:omegas}.  On the right we recover a depiction of its dual module by simply reversing the actions. 

\begin{figure}[h]
\begin{tikzpicture}[x=0.5cm,y=0.5cm]
\draw (0,-1) node {$a_0$};
\draw (-5,-4) node {$a_2$};
\draw (5,-4) node {$a_1$};
\draw[->] (-5,-6) -- node[sloped,below] {$\sigma_1-1$} (-3,-8);
\draw[->] (5,-6) -- node[sloped,below] {$\sigma_2-1$} (3,-8);
\draw (0,0) -- (-5,-5);
\draw (1,-1) -- (-4,-6);
\draw (2,-2) -- (-3,-7);
\draw (3,-3) -- (-2,-8);
\draw (5,-3) -- (-1,-9);
\draw (6,-4) -- (1,-9);
\draw (5,-3) -- (6,-4);
\draw (0,0) -- (5,-5);
\draw (-1,-1) -- (4,-6);
\draw (-2,-2) -- (3,-7);
\draw (-3,-3) -- (2,-8);
\draw (-5,-3) -- (1,-9);
\draw (-6,-4) -- (-1,-9);
\draw (-5,-3) -- (-6,-4);

\draw[->] (11,-1) -- node[sloped,above] {$\sigma_1-1$} (9,-3);
\draw[->] (17,-1) -- node[sloped,above] {$\sigma_2-1$} (19,-3);
\draw (13,0) -- (8,-5);
\draw (15,0) -- (9,-6);
\draw (16,-1) -- (11,-6);
\draw (17,-2) -- (12,-7);
\draw (18,-3) -- (13,-8);
\draw (19,-4) -- (14,-9);
\draw (15,0) -- (20,-5);
\draw (19,-6) -- (20,-5);
\draw (13,0) -- (19,-6);
\draw (12,-1) -- (17,-6);
\draw (11,-2) -- (16,-7);
\draw (10,-3) -- (15,-8);
\draw (9,-4) -- (14,-9);
\draw (8,-5) -- (9,-6);

\end{tikzpicture}
\caption{Graphical depictions of the modules $\Omega^2(\mathbb{F}_p)$ and $\Omega^{-2}(\mathbb{F}_p)$ for $\mathbb{F}_p[\mathbb{Z}/p\mathbb{Z}\oplus \mathbb{Z}/p\mathbb{Z}]$ (when $p=5$).}\label{fig:omegas}
\end{figure}

\end{document}